\newif\ifdetails
\newcommand{\DETAIL}[1]%
{\ifdetails\par\fbox{\begin{minipage}{0.9\linewidth}\textit{Detail:}
      #1\end{minipage}}\par\fi}
\newcommand{\TODO}[1]%
{\ifdetails\par\fbox{\begin{minipage}{0.9\linewidth}\textbf{TODO:}
      #1\end{minipage}}\par\fi}
\newtheorem{lemma}{Lemma}
\newtheorem{theorem}[lemma]{Theorem}
\newtheorem{corollary}[lemma]{Corollary}
\theoremstyle{remark}
\newtheorem{remark}{Remark}
\newtheorem{conjecture}{Conjecture}
\DeclareMathOperator{\N}{\eta}
\DeclareMathOperator{\diam}{diam}
\newcommand{\old}[1]{{}}
\title[Nordhaus-Gaddum inequalities for the number of connected induced \ldots
]{Nordhaus-Gaddum inequalities for the number of connected induced subgraphs in graphs}
\author{Eric O. D. Andriantiana}
\author{Audace A. V. Dossou-Olory}
\thanks{This work is supported by grants from the National Research Foundation of South Africa: grant numbers 96310 and 118521}
\address{Eric O. D. Andriantiana \\ Department of Mathematics (Pure and Applied) \\ Rhodes University \\ PO Box 94, 6140, Grahamstown \\ South Africa}
\email{E.Andriantiana@ru.ac.za}
\address{Audace A. V. Dossou-Olory \\ Department of Mathematics and Applied Mathematics \\ University of Johannesburg \\ P.O. Box 524, Auckland Park, Johannesburg 2006\\ South Africa}
\email{audace@aims.ac.za}
\subjclass[2010]{Primary 05C30; secondary 05C05, 05C35}
\keywords{Nordhaus-Gaddum inequalities, induced subgraphs, connected graphs, trees, pendent vertices.}
\begin{document}

\begin{abstract}
Let $\eta(G)$ be the number of connected induced subgraphs in a graph $G$, and $\overline{G}$ the complement of $G$. We prove that $\eta(G)+\eta(\overline{G})$  is minimum, among all $n$-vertex graphs, if and only if $G$ has no induced path on four vertices. Since the $n$-vertex star $S_n$ with maximum degree $n-1$ is the unique tree of diameter $2$, $\eta(S_n)+\eta(\overline{S_n})$ is minimum among all $n$-vertex trees, while the maximum is shown to be achieved only by the tree whose degree sequence is $(\lceil n/2\rceil,\lfloor n/2\rfloor,1,\dots,1)$. Furthermore, we prove that every graph $G$ of order $n\geq 5$ and with maximum $\eta(G)+\eta(\overline{G})$ must have diameter at most $3$, no cut vertex and the property that $\overline{G}$ is also connected. In both cases of trees and graphs that have the same order, we find that if $\eta(G)$ is maximum then $\eta(G)+\eta(\overline{G})$ is minimum.

As corollaries to our results, we characterise the unique connected graph $G$ of given order and number of vertices of degree $1$, and the unique unicyclic (connected and has only one cycle) graphs 
$G$ of a given order that minimises $\eta(G)+\eta(\overline{G})$.
\end{abstract}

\maketitle

\section{Introduction}\label{Sec:INTRO}

Graphs in this paper are simple, finite, and undirected. The vertex and edge sets of a graph $G$ are denoted by $V(G)$ and $E(G)$, respectively. The number of elements of a finite set $S$ is denoted by $|S|$. The number of vertices $|V(G)|$ will be referred to as the order of $G$.  An edge with ends $u$ and $v$ will be denoted by $uv$. A graph $H$ such that $V(H) \subseteq V(G)$ and $E(H) \subseteq E(G)$ is called a subgraph of $G$. We say that $H$ is \emph{induced} in $G$ if for every two vertices $u,v \in V(H)$, we have $uv \in E(H)$ if and only if $uv \in E(G)$. A graph $G$ is said to be connected if for every $u,v \in V(G)$, there is a sequence $v_0=u,v_1,\dots,v_t=v$ of vertices of $G$ such that $v_iv_{i+1} \in E(G)$ for all $0\leq i<t$. By $\overline{G}$, we mean the complement of $G$: the graphs $G$ and $\overline{G}$ have the same vertex set, and $uv \in E(G)$ if and only if $uv \notin E(\overline{G})$. A vertex of degree $1$ in a graph $G$ will be called a pendent vertex of $G$.

\medskip
Inequalities that relate the sum or product of a graph invariant to the same invariant of its complement are usually referred to as \emph{Nordhaus-Gaddum} type results, in honour to E.~A.~Nordhaus and J.~W.~Gaddum~\cite{nordhaus1956complementary} who first investigated such inequalities for the chromatic number $\chi(G)$ of $n$-vertex graphs $G$ in 1956.

\begin{theorem}[\cite{nordhaus1956complementary}]\label{Theo:NORD}
For a graph $G$ of order $n$, 
\begin{align*}
n+1\geq \chi(G) + \chi(\overline{G}) \geq 2\sqrt{n}\,.
\end{align*}
\end{theorem}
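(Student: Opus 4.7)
The plan is to split the proof into the lower and upper bound, which require quite different techniques.

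For the lower bound $\chi(G)+\chi(\overline{G}) \geq 2\sqrt{n}$, I would first establish the stronger multiplicative estimate $\chi(G)\cdot\chi(\overline{G}) \geq n$ and then invoke the AM--GM inequality. To prove the multiplicative bound, fix optimal proper colorings $c$ of $G$ with $\chi(G)$ colors and $c'$ of $\overline{G}$ with $\chi(\overline{G})$ colors, and consider the map $\Phi\colon V(G)\to [\chi(G)]\times [\chi(\overline{G})]$ defined by $\Phi(v)=(c(v),c'(v))$. This map is injective: if $c(u)=c(v)$, then $u$ and $v$ are non-adjacent in $G$, so adjacent in $\overline{G}$, which forces $c'(u)\neq c'(v)$. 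Counting gives $n\leq \chi(G)\chi(\overline{G})$, and then $\chi(G)+\chi(\overline{G})\geq 2\sqrt{\chi(G)\chi(\overline{G})}\geq 2\sqrt{n}$.

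For the upper bound $\chi(G)+\chi(\overline{G})\leq n+1$, my plan is induction on $n$, with the base case $n=1$ being trivial. For the inductive step, fix any vertex $v$ and set $G'=G-v$, so that by induction $\chi(G')+\chi(\overline{G'})\leq n$. Extending an optimal proper coloring of $G'$ (respectively of $\overline{G'}$) by assigning $v$ a fresh color shows that $\chi(G)\leq \chi(G')+1$ and $\chi(\overline{G})\leq \chi(\overline{G'})+1$. If at least one of these inequalities is \emph{not} tight (i.e.\ the chromatic number does not strictly increase when $v$ is added back), then $\chi(G)+\chi(\overline{G})\leq \chi(G')+\chi(\overline{G'})+1\leq n+1$, and we are done.

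The main obstacle is the remaining case, in which both inequalities are tight: $\chi(G)=\chi(G')+1$ and $\chi(\overline{G})=\chi(\overline{G'})+1$. I would handle this by a degree-counting argument. If $\chi(G)>\chi(G')$, then in every optimal proper coloring of $G'$ the vertex $v$ must see all $\chi(G')$ colors among its neighbors in $G$ (otherwise a missing color could be assigned to $v$, extending the coloring to $G$), so $\deg_G(v)\geq \chi(G')$; symmetrically $\deg_{\overline{G}}(v)\geq \chi(\overline{G'})$. Adding these and using $\deg_G(v)+\deg_{\overline{G}}(v)=n-1$ yields $\chi(G')+\chi(\overline{G'})\leq n-1$, whence $\chi(G)+\chi(\overline{G})\leq n+1$. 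The delicate point is that the increase $\chi(G)>\chi(G')$ must translate into a \emph{uniform} statement about every optimal coloring of $G'$; this is immediate from the definition of $\chi(G')$, but it is the step on which the whole case hinges.
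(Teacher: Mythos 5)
Your proposal is correct and complete: the injective map $v\mapsto(c(v),c'(v))$ gives $\chi(G)\chi(\overline{G})\geq n$ and hence the lower bound via AM--GM, and the induction with the degree count $\deg_G(v)+\deg_{\overline{G}}(v)=n-1$ in the ``both tight'' case gives the upper bound. The paper itself offers no proof of this statement --- it is quoted as the classical result of Nordhaus and Gaddum --- so there is nothing to compare against; your argument is the standard one found in the original paper and in textbooks, and every step checks out.
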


Following Theorem~\ref{Theo:NORD}, several relations of a similar type have been proposed for other graph invariants such as the Randi\'c index, the Zagreb index and the Wiener index~\cite{Wu} and  domination number \cite{Jaeger}; see also the survey~\cite{Aouchiche}. Recent results of a Nordhaus-Gaddum type include average of the maximum distance from a vertex (average eccentricity)~\cite{DANK2004}, vertex/edge connectivity~\cite{hellwig2008connectiv}, number of independent sets~\cite{Wei}, generalised edge-connectivity~\cite{Li}, Wiener polarity index~\cite{Hu}, among others.

\medskip
The aim of this paper is to present the corresponding Nordhaus-Gaddum type inequalities for the number of connected induced subgraphs in a graph. Let $\N(G)$ be the number of nonempty connected induced subgraphs in a graph $G$. If $G$ is acyclic, then $\N(G)$ counts precisely the \emph{number of subtrees} in $G$. Subtrees are fairly well studied in several contexts; see for example~\cite{GreedyWagner,AudacePeter,szekely2007binary,szekely2005subtrees}  for results on the number of subtrees,~\cite{graham1981trees} for trees that contain all small trees, \cite{jamison1983average,jamison1984monotonicity} for works on the orders of subtrees. Various estimates of the number of subtrees were reported in ~\cite{SBT1, SBT2}, and \cite{SBT3} provides a characterisation of all $n$-vertex trees with number of subtrees at least $5+n+2^{n-3}$. On the other hand, the extremal problems that estimate the number of connected subgraphs or connected induced subgraphs, or characterise graphs that have the maximum or minimum value in a prescribed class of graphs is only starting to attract attention of researchers~\cite{Alokshiya2019,CIS1,Maxwell2014}. Recently, Pandey and Patra~\cite{pandey2018extremizing} solved the cases of $n$-vertex connected graphs and unicyclic graphs, among other things. One of the authors of this paper started a systematic investigation of such extremal problems with respect to the number of connected induced subgraphs~\cite{Audacegenral2018,AudaceGirth2018,AudaceCut2019,AudaceUNICut2020}. In this paper, we study Nordhaus-Gaddum type inequalities for the number of connected induced subgraphs in a graph or tree, given the order.

\medskip
The rest of the paper is organised as follows: Section~\ref{Sec:Pre} aims to clarify notation and to prove some technical lemmas. We prove in  Section~\ref{Sec:Gen} that $\eta(G)+\eta(\overline{G})\geq 2^n+n-1$ for any graph $G$ with $n$ vertices, and that the bound is reached if and only if $G$ and $\overline{G}$ have no induced path (a tree with maximum degree at most $2$) on four vertices. Note that the $n$-vertex star (the tree with maximum degree $n-1$) satisfies this condition. It is also showed in the same section that if $n\geq 5$, then for $\eta(G)+\eta(\overline{G})$ to be maximum, $G$ and $\overline{G}$ must have diameter at most $3$ and no cut vertex.  

Since the star already reaches the minimum among all graphs of the same order, we only have to determine the $n$-vertex trees $T$ maximising $\eta(T)+\eta(\overline{T})$; see Section~\ref{Sec:Tree}. For $n>5$, we prove that the maximising tree is unique: it is the tree that has only two vertices of degree greater than $1$, with degree difference at most $1$.

\section{Preliminary results}\label{Sec:Pre}

In this section, we introduce more notation and prove some technical lemmas that are central to the proof of our main theorems. 

Let $G$ be a graph. We denote by $N_G(v)=\{u\in V(G): vu\in E(G)\}$ the set of all neighbors of $v$ in $G$, and we set $N_G[v]=N_G(v) \cup \{v\}$. We denote by $\deg_G (v)$ the degree $|N_G(v)|$ of a vertex $v$ in $G$. A path is a tree whose maximum degree is less than $3$. The number of edges in a path is also called its length. If $u,v\in V(G)$, then the length of a shortest path joining $u$ to $v$ is denoted by $d_G(u,v)$ (or $d(u,v)$ when there is no ambiguity) and called the distance between $u$ and $v$ of $G$. The maximum possible distance in a graph $G$ is called its diameter and denoted by $\diam(G)$.

If $S\subseteq V(G)$, then we write $\langle S \rangle_G$ for the induced subgraph in $G$ whose vertex set is $S$. By $G-S$ we mean the graph obtained from $G$ by deleting all vertices in $S$ (and all edges incident with them). For simplicity, we write $G-v$ instead of $G-\{v\}$. If $R\subseteq E(G)$, then we define $G-R$ as the graph obtained from $G$ by deleting all edges in $R$; we simply write $G-uv$ instead of $G-\{uv\}$. The set of all (nonempty) connected induced subgraphs in $G$ is denoted by $\mathcal{N}(G)$. We set $\N(G)=|\mathcal{N}(G)|$,
$
\N_k(G)=|\{H \in \mathcal{N}(G): |V(H)|=k\}|
$
and $$\N(G)_{v_1,\dots,v_{\ell}}=|\{H \in \mathcal{N}(G): v_i\in V(H)~\text{for all }1\leq i\leq \ell\}|\,.$$

We write $G_1 \cup G_2$ to mean the disjoint union of the graphs $G_1$ and $G_2$.  By $S_n$ we mean the star of order $n$, consisting of a vertex (called center) and $n-1$ pendent vertices attached to it. By $K_n$ we mean the complete graph of order $n$: it has the property that every two distinct vertices are adjacent. 

\medskip
The following well known remark will often be needed in this paper.

\begin{remark}\label{Rem:conn}
If $G$ is a disconnected graph, then $\overline{G}$ is connected and of diameter at most $2$.
\end{remark}

For a proof of Remark~\ref{Rem:conn}, note that every two distinct vertices that belong to different (connected) components of $G$ are adjacent in $\overline{G}$. Whenever $u,v \in V(G)$ belong to the same component of $G$ and  $uv \in E(G)$, we can take a vertex $w$ in a component of $G$ that does not contain $u$ (and $v$) to obtain the path $\langle \{u,w,v\}\rangle_{\overline{G}}$ between $u$ and $v$ in $\overline{G}$.

\medskip
Connected graphs whose complement is also connected will play an important role in this study. We define the set $\mathcal{N}(G,\overline{G})$ as
$$
\mathcal{N}(G,\overline{G}) =\{S \subseteq V(G): |S|>1,~\langle S \rangle_G ~\text{and}~ \langle S \rangle_{\overline{G}}~\text{are both connected} \}.$$ Let $G$ and $H$ be two graphs of the same order. Note that $$\N(G)+\N(\overline{G})=2^{|V(G)|}-1 +|V(G)| + |\mathcal{N}(G,\overline{G})|\,.$$
The term $2^{|V(G)|}-1$ counts all nonempty subsets of $|V(G)|$, the additional terms $|V(G)| + |\mathcal{N}(G,\overline{G})|$ account for all subsets of cardinality one and all elements of $\mathcal{N}(G,\overline{G})$, since they all induce connected subgraphs in $G$ and in $\overline{G}$. Thus, we have
$$\N(G)+\N(\overline{G})>\N(H)+\N(\overline{H})\text{ if and only if }|\mathcal{N}(G,\overline{G})|>|\mathcal{N}(H,\overline{H})|\,.$$

\medskip
The next two lemmas are also direct consequences of Remark~\ref{Rem:conn}.

\begin{lemma}\label{AuxLem}
Let $u,v$ be two distinct vertices of a graph $G$. We have $$ \N(G)_{u,v} +\N(\overline{G})_{u,v}  \geq 2^{n-2}\,.$$
The bound is attained if $u$ and $v$ belong to different components of $G$ or $\overline{G}$.
\end{lemma}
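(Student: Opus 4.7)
The plan is to index both quantities by the same family of subsets and apply Remark~\ref{Rem:conn} pointwise. Every connected induced subgraph of $G$ that contains both $u$ and $v$ is uniquely determined by its vertex set $S\cup\{u,v\}$ for some $S\subseteq V(G)\setminus\{u,v\}$, so
$$\eta(G)_{u,v}=\bigl|\{S\subseteq V(G)\setminus\{u,v\}:\langle S\cup\{u,v\}\rangle_G\text{ is connected}\}\bigr|,$$
and analogously for $\overline{G}$. There are exactly $2^{n-2}$ candidate subsets $S$. The key observation is that on the common vertex set $S\cup\{u,v\}$, the graph $\langle S\cup\{u,v\}\rangle_{\overline{G}}$ is the complement of $\langle S\cup\{u,v\}\rangle_G$. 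Remark~\ref{Rem:conn} therefore guarantees that for every such $S$, at least one of $\langle S\cup\{u,v\}\rangle_G$ and $\langle S\cup\{u,v\}\rangle_{\overline{G}}$ is connected. Summing the indicator functions of the two events over all $2^{n-2}$ subsets $S$ immediately yields $\eta(G)_{u,v}+\eta(\overline{G})_{u,v}\ge 2^{n-2}$.

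For the equality clause I would argue, by the symmetry between $G$ and $\overline{G}$, only the case in which $u$ and $v$ lie in different components of $G$. Then no induced subgraph of $G$ on a vertex set containing both $u$ and $v$ can be connected, so $\eta(G)_{u,v}=0$, and it remains to show that $\langle S\cup\{u,v\}\rangle_{\overline{G}}$ is connected for \emph{every} $S\subseteq V(G)\setminus\{u,v\}$. Since $uv\notin E(G)$, the edge $uv$ lies in $\overline{G}$; moreover any third vertex $w$ cannot belong to the component of both $u$ and $v$ in $G$, hence at least one of $uw,vw$ is a non-edge of $G$ and therefore an edge of $\overline{G}$. Thus in $\langle S\cup\{u,v\}\rangle_{\overline{G}}$ the vertices $u$ and $v$ are adjacent and every other vertex is adjacent to $u$ or to $v$, giving a connected (in fact diameter $\le 2$) subgraph. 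Consequently $\eta(\overline{G})_{u,v}=2^{n-2}$ and the bound is tight.

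The argument is quite short and the only genuine step is the realisation that the two counts can be written as sums over the same family of $2^{n-2}$ subsets, after which the inequality reduces to a pointwise application of Remark~\ref{Rem:conn}; there is no real obstacle to overcome.
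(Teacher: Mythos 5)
Your proof is correct and follows essentially the same route as the paper: both count over the $2^{n-2}$ vertex subsets containing $u$ and $v$ and apply Remark~\ref{Rem:conn} pointwise to the induced subgraph and its complement, with the equality case handled by observing that when $u,v$ lie in different components of $G$ every such subset induces a connected subgraph of $\overline{G}$. Your write-up merely spells out in more detail the diameter-$\le 2$ argument that the paper leaves implicit in the equality clause.
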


\begin{proof}
By Remark~\ref{Rem:conn}, $\N(G)_{u,v} + \N(\overline{G})_{u,v}  \geq |\{S\subseteq V(G):u,v\in S\}|=2^{n-2}.$ If $u$ and $v$ belong to different components of $G$ or $\overline{G}$, then exactly one of $\langle S\rangle _G$ and $\langle S\rangle_{\overline{G}}$ is disconnected. In this case, we get an equality.
\end{proof}

The next lemma shows, in particular, that any isolated vertex in a graph $G$ of order $n$ has the minimum contribution to $\N(G)+\N(\overline{G})$.

\begin{lemma}\label{Lem:Indiv}
Let $v$ be a vertex of a graph $G$ of order $n$. We have
$$\N(G)_v+\N(\overline{G})_v \geq 2^{n-1}+1\,.$$ Equality holds if $v$ is an isolated vertex of $G$ or $\overline{G}$. For $n \neq 1$, the inequality is strict provided that $G$ and $\overline{G}$ are both connected.
\end{lemma}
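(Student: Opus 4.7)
The plan is to partition the count according to which subsets $S\ni v$ contribute. For any $S\subseteq V(G)$ with $v\in S$, the subset $S$ contributes $1$ to $\N(G)_v$ precisely when $\langle S\rangle_G$ is connected, and similarly for $\N(\overline{G})_v$. Since $\langle S\rangle_{\overline{G}}$ is the complement of $\langle S\rangle_G$, Remark~\ref{Rem:conn} guarantees that at least one of $\langle S\rangle_G$ and $\langle S\rangle_{\overline{G}}$ is connected; hence the combined contribution of any $S\ni v$ is at least $1$, while $S=\{v\}$ contributes exactly $2$. Since there are $2^{n-1}$ subsets of $V(G)$ containing $v$, summing yields
$$\N(G)_v+\N(\overline{G})_v \;\geq\; 2+(2^{n-1}-1)\;=\;2^{n-1}+1,$$
which is the claimed inequality.

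Next I would verify the equality statement. Tightness in the above chain forces every $S\ni v$ with $|S|\geq 2$ to contribute exactly $1$, i.e.\ exactly one of $\langle S\rangle_G$ and $\langle S\rangle_{\overline{G}}$ is connected. If $v$ is isolated in $G$, then for every such $S$ the vertex $v$ is isolated in $\langle S\rangle_G$, so $\langle S\rangle_G$ is disconnected and Remark~\ref{Rem:conn} forces $\langle S\rangle_{\overline{G}}$ to be connected. This delivers contribution $1$ for each $S$ with $|S|\geq 2$, confirming equality, and a symmetric argument handles the case when $v$ is isolated in $\overline{G}$.

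Finally, for the strict inequality when $n\geq 2$ and both $G$ and $\overline{G}$ are connected, I would single out the subset $S=V(G)$: it contains $v$, has $|S|\geq 2$, and both $\langle S\rangle_G=G$ and $\langle S\rangle_{\overline{G}}=\overline{G}$ are connected, so this particular $S$ contributes $2$ instead of $1$ to the sum and the lower bound becomes strict. The whole argument rests on a summand-by-summand use of Remark~\ref{Rem:conn} together with the identification of the two ``doubly-connected'' subsets $\{v\}$ and (under the extra hypothesis) $V(G)$; no serious obstacle is anticipated.
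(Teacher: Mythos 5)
Your proof is correct and follows essentially the same route as the paper: a subset-by-subset application of Remark~\ref{Rem:conn} to all $2^{n-1}$ sets containing $v$, with $\{v\}$ counted twice for the bound and $V(G)$ supplying the extra unit for strictness when both $G$ and $\overline{G}$ are connected. Your verification of the equality case when $v$ is isolated is slightly more explicit than the paper's, but the argument is the same.
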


\begin{proof}
By Remark~\ref{Rem:conn},
$
  \N(G)_{v} + \N(\overline{G})_{v}  \geq |\{S\subseteq V(G):v\in S\}| +1 =2^{n-1}+1$,
since $\langle \{v\}\rangle_G=\langle \{v\}\rangle_{\overline{G}}$ counts twice. 
Assume that $n\neq 1$ and that $G$ and $\overline{G}$ are both connected. Then both $\{v\}$ and $V(G)$ induce connected subgraphs in $G$ and $\overline{G}$, hence
$\N(G)_{v} + \N(\overline{G})_{v}  \geq |\{S\subseteq V(G):v\in S\}| +2$. Thus, the strict inequality in the lemma.
\end{proof}

Note that $\N(G)_v+\N(\overline{G})_v$ also counts the number of connected induced subgraphs that one will loose in $G$ and $\overline{G}$ when deleting the vertex $v$. One can also ask a similar question when transfering an edge from $G$ to $\overline{G}$, that is when transforming $G$ and $\overline{G}$ to be $G-uv$ and $\overline{G}+uv$, respectively, for some $u,v\in V(G)$ and $uv\in E(G)$ : Remark~\ref{Rem:SameNeighb} sheds some light on this.

\begin{remark}\label{Rem:SameNeighb}
Let $uv \in E(G)$ such that the set of neighbors of $u$ in $G-uv$ and the set of neighbors of $v$ in $G-uv$ coincide. Then $\langle \{u,v\}\rangle_G$ is the only connected induced subgraph of $G$ that becomes disconnected when deleting the edge $uv$. Thus $\N(G)-\N(G-uv)=1$. On the other hand, the set of neighbors of $u$ and the set of neighbors of $v$ in $\overline{G}$ also coincide. 
Then $\langle \{u,v\}\rangle_{\overline{G}}$ is the only disconnected induced subgraph of $\overline{G}$ that becomes connected when adding the edge $uv$. Thus, $\N(\overline{G}+uv)-\N(\overline{G})=1$. It follows that
$$\N(G)+\N(\overline{G})= \N(G-uv)+\N(\overline{G}+uv)\,.$$
\end{remark}

\medskip
Combined with Lemma~\ref{Lem:Indiv}, the next lemma implies that disconnected graphs are not good candidate if $\N(G)+\N(\overline{G})$ is to be maximised among all $n$-vertex graphs $G$; see the proof of Theorem~\ref{Sec:Gen} in Section~\ref{Th:GandGbar}.

\begin{lemma}\label{Lem:Combine}
Let $G$ and $H$ be two disjoint connected graphs, and consider $u\in V(G)$ and $v\in V(H)$.  Let $B$ be the graph obtained from $G$ and $H$ by merging $u$ and $v$. 
\begin{itemize}
 \item[1)] $\N(B\cup K_1)\geq \N(G\cup H)$ with equality if and only if at least $G$ or $H$ is of order $1$. 
 \medskip
 \item[2)] $\N(B\cup K_1) + \N(\overline{B\cup K_1})\geq \N(G \cup H)+\N(\overline{G\cup H})
$ with equality if and only if $\deg_G (u)=|V(G)|-1$ and $\deg_H (v)=|V(H)|-1$.
\end{itemize}
\end{lemma}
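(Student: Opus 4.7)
For Part 1, I would compute $\N(B)$ by partitioning the nonempty connected induced subgraphs of $B$ according to their intersection with $V(G)$ and $V(H-v)$. The subsets contained in $V(G)$ contribute $\N(G)$, those contained in $V(H-v)$ contribute $\N(H-v)$, and a subset $T$ meeting both parts must contain $u$ (since every path in $B$ from $V(G)\setminus\{u\}$ to $V(H-v)$ passes through $u$); such $T$ corresponds to a pair $(S_G,S_H)$ with $S_G\subseteq V(G)$ containing $u$ and $\langle S_G\rangle_G$ connected, and $S_H\subseteq V(H)$ containing $v$, $|S_H|\geq 2$, and $\langle S_H\rangle_H$ connected. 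This yields $\N(B)=\N(G)+\N(H-v)+\N(G)_u(\N(H)_v-1)$, and a short manipulation using $\N(H)=\N(H-v)+\N(H)_v$ and $\N(G\cup H)=\N(G)+\N(H)$ gives
\[
\N(B\cup K_1)-\N(G\cup H)=(\N(G)_u-1)(\N(H)_v-1)\geq 0\,.
\]
Since $G$ and $H$ are connected, $\N(G)_u=1$ holds exactly when $|V(G)|=1$ (and analogously for $H$), yielding the claimed equality condition.

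For Part 2, I would apply the identity $\N(X)+\N(\overline{X})=2^{|V(X)|}-1+|V(X)|+|\mathcal{N}(X,\overline{X})|$ from the paragraph preceding Lemma~\ref{AuxLem}. Since $B\cup K_1$ and $G\cup H$ have the same order, the claim is equivalent to $|\mathcal{N}(B\cup K_1,\overline{B\cup K_1})|\geq|\mathcal{N}(G\cup H,\overline{G\cup H})|$. Two case analyses streamline both sides: any $S$ containing the isolated vertex of $B\cup K_1$ has $\langle S\rangle_{B\cup K_1}$ disconnected, so $|\mathcal{N}(B\cup K_1,\overline{B\cup K_1})|=|\mathcal{N}(B,\overline{B})|$; any $S$ meeting both $V(G)$ and $V(H)$ has $\langle S\rangle_{G\cup H}$ disconnected, so $|\mathcal{N}(G\cup H,\overline{G\cup H})|=|\mathcal{N}(G,\overline{G})|+|\mathcal{N}(H,\overline{H})|$. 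It therefore suffices to prove $|\mathcal{N}(B,\overline{B})|\geq|\mathcal{N}(G,\overline{G})|+|\mathcal{N}(H,\overline{H})|$.

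For this last inequality, I would produce two injections with disjoint images. The first is the inclusion $\iota_G\colon \mathcal{N}(G,\overline{G})\to\mathcal{N}(B,\overline{B})$ given by $S\mapsto S$, which is well-defined because $B$ and $\overline{B}$ restrict to $G$ and $\overline{G}$ on $V(G)$. The second is $\iota_H\colon \mathcal{N}(H,\overline{H})\to\mathcal{N}(B,\overline{B})$ which replaces $v$ by $u$ when $v\in S$ and is the identity otherwise; this is well-defined because $u$ inherits all of $v$'s adjacencies, making $\langle\iota_H(S)\rangle_B\cong\langle S\rangle_H$ and $\langle\iota_H(S)\rangle_{\overline{B}}\cong\langle S\rangle_{\overline{H}}$. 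The two images lie respectively in $V(G)$ and $(V(H)\setminus\{v\})\cup\{u\}$, which intersect only in the singleton $\{u\}$, excluded by the condition $|S|\geq 2$.

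For the equality characterization, equality holds iff every $T\in\mathcal{N}(B,\overline{B})$ lies in one of the two images, i.e., iff no $T\in\mathcal{N}(B,\overline{B})$ meets both $V(G)\setminus\{u\}$ and $V(H)\setminus\{v\}$. When $\deg_G(u)=|V(G)|-1$ and $\deg_H(v)=|V(H)|-1$, the vertex $u$ is universal in $B$, so any such $T$ must contain $u$ for $B$-connectivity; but then $u$ is isolated in $\overline{B}$, forcing $\langle T\rangle_{\overline{B}}$ to be disconnected, and no bad $T$ exists. Conversely, if (say) $\deg_G(u)<|V(G)|-1$, choose a non-neighbor $a$ of $u$ in $G$ and a neighbor $b$ of $v$ in $H$ (the latter exists since $H$ is connected of order at least $2$; the case $|V(H)|=1$ makes the lemma trivial), and take $T=V(G)\cup\{b\}$. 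Then $\langle T\rangle_B$ is connected via $G$ together with the edge $ub$, while $\langle T\rangle_{\overline{B}}$ is connected because every vertex of $V(G)\setminus\{u\}$ is joined to $b$ in $\overline{B}$ (the complete bipartite structure) and $ua\in E(\overline{B})$; this $T$ witnesses the strict inequality. The main obstacle is designing this witness $T$ and verifying both connectivity conditions simultaneously, which requires combining the non-universality of $u$ in $G$ with the connectedness of $H$.
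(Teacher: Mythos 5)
Your Part~1 is essentially the paper's own computation: both arrive at $\N(B\cup K_1)-\N(G\cup H)=(\N(G)_u-1)(\N(H)_v-1)$ and read off the equality condition from connectedness of $G$ and $H$. Your Part~2, however, takes a genuinely different and correct route. The paper explicitly enumerates the connected induced subgraphs of $\overline{B}$ that cross the merged vertex and derives the exact identity $\N(\overline{B\cup K_1})=\N(\overline{G\cup H})-(2^{\deg_G(u)}-1)(2^{\deg_H(v)}-1)$, which it then combines with Part~1 and the bounds $\N(G)_u\geq 2^{\deg_G(u)}$, $\N(H)_v\geq 2^{\deg_H(v)}$. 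You instead reduce everything to the sets $\mathcal{N}(\cdot,\overline{\cdot})$ via the identity preceding Lemma~\ref{AuxLem} and exhibit two injections with disjoint images, so that equality becomes the statement that no element of $\mathcal{N}(B,\overline{B})$ meets both $V(G)\smallsetminus\{u\}$ and $V(H)\smallsetminus\{v\}$; your witness $T=V(G)\cup\{b\}$ for the strict case checks out, since $b$ is adjacent in $\overline{B}$ to every vertex of $V(G)\smallsetminus\{u\}$ and $ua\in E(\overline{B})$. Your version is more conceptual and avoids the algebraic simplification in the paper's display, at the cost of not producing the exact value of the gap, namely $(\N(G)_u-1)(\N(H)_v-1)-(2^{\deg_G(u)}-1)(2^{\deg_H(v)}-1)$, which the paper's method yields for free. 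Two small points: in your forward equality direction, the reason a crossing $T$ must contain $u$ is that $u$ separates the two sides of $B$ (this holds always), not that $u$ is universal --- universality is only needed for the subsequent step that $u$ is then isolated in $\overline{B}$; and the degenerate case you flag (say $|V(H)|=1$ with $\deg_G(u)<|V(G)|-1$, where $B\cup K_1=G\cup H$ so equality holds although the stated condition fails) is a genuine blemish in the ``only if'' direction that the paper's own proof shares, so your implicit assumption that both factors have order at least $2$ is the right reading.
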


\begin{proof}
For the inequality in 1), we have
\begin{align}\label{Eq:guh}
\begin{split}
\N(B\cup K_1)&=\N(G)+\N(H)+(\N(G)_u -1)(\N(H)_v-1)\\
& \geq \N(G)+\N(H) =\N(G\cup H)\,.
\end{split}
\end{align}
Equality holds if and only if $\N(G)_u =1$ or $\N(H)_v=1$.

It is left to prove 2). The graph $\overline{B\cup K_1}$ can be obtained by joining a new vertex $w$ to all vertices of $\overline{B}$, while $\overline{B}$ is the graph obtained from $\overline{G}$ and $\overline{H}$ by first merging vertices $u$ and $v$, and then adding all possible edges joining a vertex of $\overline{G}-u$ to a vertex of $\overline{H}-v$. Set $g=|V(G)|$ and $h=|V(H)|$. Note that $\N(\overline{B\cup K_1})=2^{g+h-1} + \N(\overline{B})$. Let us evaluate $\N(\overline{B})$. The connected induced subgraphs of $\overline{B}$ that cross $u$ (and $v$), i.e. that contain a vertex of $G-u$ and a vertex of $H-v$, can be categorised as follows:
\begin{itemize}
\item those that do not contain $v$ (and $u$): there are $(2^{g-1}-1) (2^{h-1}-1)$ in total;
\item those that contain $v$ and a nonempty set of neighbors of $v$ in $\overline{H}$: there are
$$ (2^{\deg_{\overline{H}} (v)}-1) (2^{g-1}-1) 2^{h-1-\deg_{\overline{H}} (v)}
$$ in total;
\item those that contain $v$ (and $u$) and no neighbor of $v$ in $\overline{H}$: there are
$$ (2^{\deg_{\overline{G}} (u)}-1) (2^{h-1-\deg_{\overline{H}} (v)}-1) 2^{g-1-\deg_{\overline{G}} (u)}
$$ in total.
\end{itemize}
On the other hand, there are precisely $\N(\overline{G})+\N(\overline{H})-1$ connected induced subgraphs of $\overline{B}$ that do not cross $u$ (and $v$). Therefore, 
\begin{align}\label{ForSix}
\N(\overline{B\cup K_1})&= 2^{g+h-1} + (2^{g-1}-1) (2^{h-1}-1) + (2^{\deg_{\overline{H}} (v)}-1) (2^{g-1}-1) 2^{h-1-\deg_{\overline{H}} (v)} \nonumber\\
& \hspace*{1.7cm} + (2^{\deg_{\overline{G}} (u)}-1) (2^{h-1-\deg_{\overline{H}} (v)}-1) 2^{g-1-\deg_{\overline{G}} (u)}+\N(\overline{G})+\N(\overline{H})-1 \nonumber\\
&=\N(\overline{G})+\N(\overline{H}) + (2^g-1) (2^h-1)-(2^{g-1-\deg_{\overline{G}} (u)}-1)(2^{h-1-\deg_{\overline{H}} (v)}-1) \nonumber \\
&=\N(\overline{G\cup H}) -(2^{\deg_G (u)}-1)(2^{\deg_H (v)}-1)\,.
\end{align}
It follows from~\eqref{Eq:guh} and~\eqref{ForSix} that	
\begin{align*}
\N(B\cup K_1) + \N(\overline{B\cup K_1})&=\N(G \cup H)+\N(\overline{G\cup H})\\
& \hspace*{1cm} + (\N(G)_u -1)(\N(H)_v-1) -(2^{\deg_G (u)}-1)(2^{\deg_H (v)}-1)\\
& \geq \N(G \cup H)+\N(\overline{G\cup H})\,.
\end{align*}
The last inequality stems from the observation that a vertex and any subset of its neighborhood always induce a connected graph, hence $\N(G)_u\geq 2^{\deg_G(u)}$ and $\N(H)_v\geq 2^{\deg_H(v)}$.
Equality happens in the formula if and only if $\N(G)_u=2^{\deg_G (u)}$ and $\N(H)_v=2^{\deg_H (v)}$, that is only if $\deg_G (u)=g-1$ and $\deg_H (v)=h-1$.
\end{proof}

\medskip
We are now ready to state and prove our main theorems.

\section{$n$-vertex graphs}\label{Sec:Gen}

In this section, we give a full characterisation of those $n$-vertex graphs $G$ that satisfy
$$\N(H)+\N(\overline{H})\geq \N(G)+\N(\overline{G})$$ for all $n$-vertex graphs $H$. Furthermore, we manage to obtain some partial characterisations of an $n$-vertex graph $G$ that satisfies
$$\N(H)+\N(\overline{H}) \leq \N(G)+\N(\overline{G})$$ for all graphs $H$ of order $n$. In the former case, $G$ will be referred to as a \emph{minimal} graph while in the latter case, $G$ will be referred to as a \emph{maximal} graph. 

Let us bear in mind that every shortest path in a graph $G$ is an induced path of $G$.

\begin{theorem}\label{Th:GandGbar}
For a graph $G$ of order $n$, we have
\begin{align}\label{Eq:Nk}
\N_k(G) +\N_k(\overline{G}) \geq \binom{n}{k} \text{ for all } k>1\,.
\end{align}
In particular,
\begin{align}\label{Eq:NOR}
\N(G) +\N(\overline{G}) \geq 2^n +n -1.
\end{align}
Equality holds in~\eqref{Eq:NOR} if and only if $G$ or $\overline{G}$ has no induced path of length at least $3$. This includes, for instance, all $n$-vertex disjoint union of complete graphs and their respective complements, which are complete multipartite graphs such as stars.
\end{theorem}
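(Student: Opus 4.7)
The plan is to prove the finer bound \eqref{Eq:Nk} subset by subset and then sum. For any $k\geq 2$ and any $S\subseteq V(G)$ with $|S|=k$, Remark~\ref{Rem:conn} applied to $\langle S\rangle_G$ says that at least one of $\langle S\rangle_G$, $\langle S\rangle_{\overline{G}}$ is connected. Summing over the $\binom{n}{k}$ such $S$ gives \eqref{Eq:Nk}, and then \eqref{Eq:NOR} follows by telescoping and adding the trivial contribution $\N_1(G)+\N_1(\overline{G})=2n$:
\[
\N(G)+\N(\overline{G}) \;\geq\; 2n + \sum_{k=2}^{n}\binom{n}{k} \;=\; 2n + (2^n-1-n) \;=\; 2^n+n-1.
\]

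\textbf{Necessity of the $P_4$-free condition.} For the equality case I would simply run the above argument backwards. Equality in \eqref{Eq:NOR} forces \eqref{Eq:Nk} to be tight for every $k\geq 2$, and Remark~\ref{Rem:conn} then says that for every $S\subseteq V(G)$ with $|S|\geq 2$, \emph{exactly} one of $\langle S\rangle_G$, $\langle S\rangle_{\overline{G}}$ is connected. If $G$ contained an induced $P_4$ on a 4-set $S$, then $\langle S\rangle_{\overline{G}}$ would also be a $P_4$ (since $P_4$ is self-complementary), making both subgraphs connected and contradicting the previous sentence. Hence $G$ is $P_4$-free, and so is $\overline{G}$; equivalently, neither contains an induced path of length at least~$3$.

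\textbf{Sufficiency.} Conversely, assuming $G$ (equivalently $\overline{G}$) is $P_4$-free, I would prove by induction on $|S|$ that every $S\subseteq V(G)$ with $|S|\geq 2$ has $\langle S\rangle_G$ or $\langle S\rangle_{\overline{G}}$ disconnected, which upgrades \eqref{Eq:Nk} and hence \eqref{Eq:NOR} to equalities. The base $|S|=2$ is immediate. For $|S|\geq 3$, pick $v\in S$; by the induction hypothesis and symmetry (replacing $G$ by $\overline{G}$ if necessary, which preserves $P_4$-freeness) we may assume that $\langle S\setminus\{v\}\rangle_G$ splits into components $C_1,\dots,C_k$ with $k\geq 2$. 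The crux is this: if some $C_i$ contains both a neighbor $x$ and a non-neighbor $y$ of $v$ joined by an edge in $G$ (obtained by walking a shortest path in $C_i$ from a neighbor to a non-neighbor of $v$ and looking at the transition edge), then for any $z\in C_j$ with $j\neq i$ and $vz\in E(G)$, the set $\{z,v,x,y\}$ induces a $P_4$ in $G$ (edges $zv,vx,xy$; non-edges $zx,zy,vy$ because $z$ lies in a different component of $\langle S\setminus\{v\}\rangle_G$), which is forbidden. So $v$ has no neighbor outside such a ``mixed'' $C_i$, and then the other $C_j$'s remain as separate components of $\langle S\rangle_G$, making $\langle S\rangle_G$ disconnected. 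Otherwise $v$ is, for each $C_i$, either fully adjacent or fully non-adjacent; in that case either some $C_i$ contributes no neighbor of $v$ (so $\langle S\rangle_G$ has more than one component) or $v$ is universal in $\langle S\rangle_G$ (so $v$ is isolated in $\langle S\rangle_{\overline{G}}$, which is then disconnected since $|S|\geq 3$).

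\textbf{Main obstacle.} The non-trivial step is the sufficiency direction, which is essentially Seinsche's characterisation of cographs; the case analysis above on $v$'s adjacencies to the components $C_i$ is what carries the induction. The rest is bookkeeping with binomial coefficients, and the illustrative examples (disjoint unions of complete graphs, complete multipartite graphs, stars) follow immediately once one checks directly that they are $P_4$-free.
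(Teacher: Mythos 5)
Your proposal is correct, and the inequality and the necessity of the $P_4$-free condition are handled exactly as in the paper: Remark~\ref{Rem:conn} gives \eqref{Eq:Nk} subset by subset, summation gives \eqref{Eq:NOR}, and the self-complementarity of $P_4$ shows that an induced $P_4$ forces strict inequality. Where you genuinely diverge is the sufficiency direction. The paper fixes a connected induced subgraph $H$ of the $P_4$-free graph $G$, observes $\diam(H)\leq 2$, picks a diametral pair $u,v$ with common neighbourhood $A$, and through four structural claims produces a nonempty set $B\subseteq A$ all of whose vertices are adjacent to everything in $V(H)\smallsetminus B$, whence $\overline{H}$ is disconnected. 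You instead run the classical induction behind Seinsche's theorem on cographs: remove a vertex $v$ from $S$, use the induction hypothesis (and complementation symmetry, legitimate since $P_4$-freeness is self-complementary) to split $\langle S\smallsetminus\{v\}\rangle_G$ into components $C_1,\dots,C_k$, and analyse $v$'s adjacencies to each $C_i$; your case analysis (the ``transition edge'' in a mixed component producing a forbidden $P_4$, versus the homogeneous case where $v$ is either non-adjacent to a whole component or universal) is complete and correct. Your route is arguably cleaner and places the result in its natural context (every induced subgraph of a cograph on at least two vertices is disconnected or co-disconnected), while the paper's argument is self-contained and yields the more explicit structural information that $H$ contains a set of vertices dominating all the rest, which is of independent use. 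Both establish that $\mathcal{N}(G,\overline{G})=\emptyset$, so the equality case follows either way.
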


\begin{proof}
Inequality~\eqref{Eq:Nk} is a straightforward consequence of Remark~\ref{Rem:conn}. Summing up~\eqref{Eq:Nk} for all possible values of $k$, we obtain~\eqref{Eq:NOR} since the subgraph of order $1$ counts for both $G$ and $\overline{G}$. If $G$ has an induced path $P_4$ of length $3$, then the set $\mathcal{N}(G,\overline{G})$ is non-empty since $\overline{P_4}$ is isomorphic to $P_4$, and it is connected. Thus $\N(G) +\N(\overline{G}) > 2^n +n -1$. Conversely, suppose that $G$ does not have an induced path $P_4$. Then $\diam (G)\leq 2$. We are going to show that $\mathcal{N}(G,\overline{G})$ is empty, thus proving the theorem. Let $H$ be a connected induced subgraph of $G$ such that $|V(H)| > 1$. Then $H$ has no induced $P_4$ and thus $\diam (H) \leq 2$. If $\diam(H)=1$, then $H$ is a complete graph and thus $\overline{H}$ is disconnected. Otherwise, $\diam(H)=2$. Let $u,v\in V(H)$ such that $d_{H}(u,v)=2$. Set $A=N_H(u)\cap N_H(v)$. Clearly, $A$ is non-empty.

\medskip
The following four claims reveal some information on the structure of $H$, enough to see that there is a subset $B$ of $V(H)$ whose elements are adjacent to all vertices in $V(H)\smallsetminus B.$ This means that $\overline{H}$ is disconnected.

\medskip

\textbf{Claim 1}: \emph{For every $x\in N_H(u)\smallsetminus A$ and $y\in A$, $xy$ is an edge of $H$. Likewise, for every $x \in N_H(v)\smallsetminus A$ and $y \in A$, $xy$ is an edge of $H$.}
\begin{proof}[Proof of Claim 1]
Simply note that if $xy \notin E(H)$, then vertices $x,u,y,v$ form an induced path $P_4$, which is a contradiction. 
\end{proof}

\medskip
\textbf{Claim 2}: \emph{Let $z\in V(H)\smallsetminus (N_H[v]\cup N_H[u])$. Then $z$ has a neighbor in $A$.}
\begin{proof}[Proof of Claim 2]
Suppose to the contrary that $z$ has no neighbor in $A$. By $\diam(H)=2$, let $P=\langle \{ z,x_1,y\}\rangle_H$ be a shortest path from $z$ to a vertex $y \in A$. Note that $x_1 \notin \{u,v\}\cup A$. If $x_1 \notin N_H(u)$ then $\langle \{z,x_1,y,u\} \rangle_H =P_4$. Similarly, if $x_1 \notin N_H(v)$ then $\langle \{z,x_1,y,v\} \rangle_H =P_4$. Thus, we get a contradiction in both situations.
\end{proof}

\medskip

\textbf{Claim 3}: \emph{By Claim~2, let $w$ be a neighbor of $z\in V(H)\smallsetminus (N_H[u] \cup N_H[v])$ in $A$. Then $w$ is adjacent to all vertices in $A$ that are not a neighbor of $z$ in $H$.}
\begin{proof}[Proof of Claim 3]
Suppose that there is a $y \in A \smallsetminus N_H(z)$ such that $w \notin N_H(y)$. Then $\langle \{z,w,v,y\} \rangle_H=P_4$, a contradiction.
\end{proof}

\medskip

\textbf{Claim 4}: \emph{Let $z,z'\in V(H)\smallsetminus (N_H[u]\cup N_H[v])$ such that $z\neq z'$. Then either $A \cap N_H(z) \subseteq A \cap N_H(z')$ or $A \cap N_H(z') \subseteq A \cap N_H(z)$.}
\begin{proof}[Proof of Claim 4]
By Claim~2, $A \cap N_H(z) \neq \emptyset$ and $A \cap N_H(z') \neq \emptyset$. Suppose that Claim~4 is not true. Then there is $w' \in A \cap N_H(z')$ such that $w' \notin A \cap N_H(z)$, and there is $w'' \in A \cap N_H(z)$ such that $w'' \notin A \cap N_H(z')$. By Claim~3, $w'$ is adjacent to $w''$. If $z$ and $z'$ are not adjacent, then $\langle \{z,w'',w',z'\} \rangle=P_4$ (a contradiction). If $z$ and $z'$ are adjacent, then $\langle \{z',z,w'',v\} \rangle =P_4$ (a contradiction). This proves the claim.
\end{proof}

\medskip
Set $V(H)\smallsetminus (N_H[u]\cup N_H[v])=\{z_1,\ldots,z_k\}$. We get from Claims~2 and~4 that
\begin{align}
\label{Eq:Subs}
\emptyset\neq A \cap N_H(z_{\sigma(1)})\subseteq  \cdots \subseteq A \cap N_H(z_{\sigma(k)})\,
\end{align}
for some permutation $\sigma$ of the set $\{1,\dots,k\}$.
Claims~1 through~3, and the relation \eqref{Eq:Subs} imply that each $q \in A\cap N_H(z_1)$ is adjacent to all vertices in $V(H)\smallsetminus (A\cap N_H(z_{\sigma(1)}))$. Hence, there is no edge of $\overline{H}$ from a vertex in $B:=A\cap N_H(z_{\sigma(1)})$ to a vertex in $V(H) \smallsetminus B$. That is $\overline{H}$ is disconnected; thus the set $\mathcal{N}(G,\overline{G})$ is empty, which was to be proved.
\end{proof}

\medskip
We remark that for $n>1$, equality never holds in~\eqref{Eq:NOR} if $G$ and $\overline{G}$ are both connected. 

Let $\mathbb{G}_n$ be the set of all graphs of order $n$ and possibly other specific properties. If there is $G\in\mathbb{G}_n$ such that $G$ has no induced path of length $3$, then by Theorem~\ref{Th:GandGbar}, 
$$
\eta(G)+\eta(\overline{G})
=\min\left\lbrace \eta(H)+\eta(\overline{H}): H\in \mathbb{G}_n \right\rbrace \,.
$$
Hence the following corollaries are immediate consequences of Theorem~\ref{Th:GandGbar}.

\begin{corollary}
Let $R_{n,k}$ be the graph of order $n>4$ obtained by merging a vertex of the complete graph $K_{n-k}$ with the center of the star $S_{k+1}$, for some $k<n-2$.  For all connected graphs $G\neq R_{n,k}$ with $n$ vertices of which $k$ are pendent, we have
$$
\eta(R_{n,k})+\eta(\overline{R_{n,k}})
<  \eta(G)+\eta(\overline{G}).
$$
\end{corollary}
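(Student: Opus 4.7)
The plan is to derive the corollary directly from Theorem \ref{Th:GandGbar}, which gives $\eta(H) + \eta(\overline{H}) \ge 2^n + n - 1$ for every $n$-vertex graph $H$, with equality exactly when $H$ or $\overline{H}$ has no induced $P_4$.

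First I would verify that $R_{n,k}$ attains this minimum by showing it is $P_4$-free. Any four vertices of $R_{n,k}$ split into some pendent leaves (each adjacent only to the merge vertex $c$) and some vertices of $K_{n-k}$. A short case check shows that the induced subgraph on such a quadruple is always either disconnected (when two leaves are chosen together with non-$c$ $K_{n-k}$ vertices), contains a triangle coming from $K_{n-k}$, or is a star $K_{1,3}$ centred at $c$. None of these is $P_4$, so Theorem \ref{Th:GandGbar} gives $\eta(R_{n,k}) + \eta(\overline{R_{n,k}}) = 2^n + n - 1$.

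For the strict inequality, I would argue that no other connected $n$-vertex graph with exactly $k$ pendent vertices can also satisfy the equality case of Theorem \ref{Th:GandGbar}. Suppose $G$ is such a candidate and $G$ itself is $P_4$-free (the case where $\overline{G}$ is $P_4$-free is handled symmetrically, using that $\overline{R_{n,k}}$ is likewise $P_4$-free). Then $\diam(G) \le 2$, and any pendent vertex $v$ of $G$ forces its unique neighbour $w$ to be universal in $G$, since every other vertex must lie within distance two of $v$. Deleting $w$ leaves exactly $k$ isolated vertices (the pendents of $G$) together with an induced $P_4$-free subgraph $H'$ on $n - 1 - k$ vertices of minimum degree at least $1$. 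If one can show $H' = K_{n-1-k}$, then $G = R_{n,k}$, which is the target.

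The main obstacle is precisely this last identification, because $P_4$-freeness together with a minimum-degree condition does not uniquely determine a cograph on a prescribed number of vertices, so Theorem \ref{Th:GandGbar} alone is insufficient. The natural route is to combine it with the counting identity $\eta(G) + \eta(\overline{G}) = 2^n - 1 + n + |\mathcal{N}(G, \overline{G})|$ from Section \ref{Sec:Pre} and the edge-transfer viewpoint of Remark \ref{Rem:SameNeighb}: for any non-edge inside $H'$ one wants to show that inserting it strictly decreases $|\mathcal{N}(G, \overline{G})|$ while preserving connectedness, connectedness of the complement, and the pendent count. Iterating this local modification would then push $H'$ to be complete and single out $R_{n,k}$ as the unique minimiser; making the monotonicity at each step strict is the delicate part of the proof.
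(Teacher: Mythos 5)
Your first step---checking that $R_{n,k}$ is $P_4$-free and hence attains the lower bound $2^n+n-1$ of Theorem~\ref{Th:GandGbar}---is exactly what the paper does: the corollary is presented there as an immediate consequence of the observation that any member of a class of $n$-vertex graphs having no induced $P_4$ realises the minimum of $\eta(H)+\eta(\overline{H})$ over that class. For the non-strict inequality $\eta(R_{n,k})+\eta(\overline{R_{n,k}})\leq\eta(G)+\eta(\overline{G})$ you and the paper agree, and your case analysis for $P_4$-freeness of $R_{n,k}$ is correct.

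The gap you flag in the strictness part is genuine, and it cannot be closed, because the uniqueness claim is false as stated (the paper supplies no argument for it either). By the equality case of Theorem~\ref{Th:GandGbar}, and since $P_4$ is self-complementary, a connected graph $G$ of order $n$ with $k$ pendent vertices ties with $R_{n,k}$ precisely when $G$ has no induced $P_4$; your own reduction shows that such a $G$ must consist of a universal vertex $c$ joined to the disjoint union of $k$ isolated vertices and a $P_4$-free graph $H'$ on $n-1-k$ vertices with no isolated vertex, and, as you suspected, $H'$ need not be complete. Concretely, for $n=7$ and $k=2$, let $G$ be a vertex $c$ joined to every vertex of $2K_1\cup C_4$: this graph is connected, has exactly two pendent vertices, has no induced $P_4$ (any quadruple containing $c$ has a vertex of degree $3$, and any quadruple avoiding $c$ induces a subgraph of $2K_1\cup C_4$, which is never $P_4$), and is not isomorphic to $R_{7,2}$ (its clique number is $3$, not $5$); yet $\eta(G)+\eta(\overline{G})=2^7+6=\eta(R_{7,2})+\eta(\overline{R_{7,2}})$. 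This also shows why your proposed repair---inserting a missing edge of $H'$ and proving a strict decrease of $|\mathcal{N}(G,\overline{G})|$---cannot work: every $P_4$-free competitor already has $|\mathcal{N}(G,\overline{G})|=0$, the same value as $R_{n,k}$, so there is nothing to decrease. The corollary is salvageable only with $\leq$ in place of $<$, or by restricting the uniqueness claim so as to exclude the other connected $P_4$-free graphs with $k$ pendent vertices.
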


It is not hard to show that $\eta(R_{n,k})\geq \eta(G)$ holds for all graphs $G$ with order $n>4$ and $k<n-2$ pendent vertices: Starting from $G$, add all possible edges between any pair of vertices that are not pendent to obtain the subgraph $K_{n-k}$, then move all pendent vertices such that they are all attached to one vertex of $K_{n-k}$. Each of these transformations does not decrease the number of connected induced subgraphs.

\begin{corollary}\label{Coro:Treeslower}
We have
$$
\eta(S_n)+\eta(\overline{S_n})< \eta(T)+\eta(\overline{T})
$$
for all $n$-vertex trees $T$ that are different from $S_n$.
\end{corollary}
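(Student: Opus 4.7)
The plan is to derive the corollary directly from Theorem~\ref{Th:GandGbar}, using only a structural observation about trees of small diameter. For $n\geq 2$, the star $S_n$ has diameter $2$ and hence no induced $P_4$, so the equality clause of Theorem~\ref{Th:GandGbar} gives $\eta(S_n)+\eta(\overline{S_n})=2^n+n-1$. It therefore suffices to show that every tree $T$ on $n$ vertices with $T\neq S_n$ (necessarily $n\geq 4$, since $S_n$ is the unique tree on at most three vertices) satisfies $\eta(T)+\eta(\overline{T})>2^n+n-1$.

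The key observation is that a tree of diameter at most $2$ is a star: if $\diam(T)\leq 2$, any two non-central vertices share a common neighbor, forcing a unique vertex of degree $n-1$. Consequently $T\neq S_n$ forces $\diam(T)\geq 3$, and any shortest path of length $3$ in $T$ is an induced $P_4$ (as shortest paths are always induced). I would then invoke Theorem~\ref{Th:GandGbar}: to get the strict inequality we need both $T$ and $\overline{T}$ to contain an induced $P_4$, but this is automatic because $P_4$ is self-complementary. Thus the same four vertices that induce $P_4$ in $T$ also induce $P_4$ in $\overline{T}$, so the equality condition of Theorem~\ref{Th:GandGbar} fails, yielding
\begin{equation*}
\eta(T)+\eta(\overline{T})>2^n+n-1=\eta(S_n)+\eta(\overline{S_n}).
\end{equation*}

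There is no real obstacle here; the result is essentially a tree-theoretic specialisation of Theorem~\ref{Th:GandGbar}, with the only nontrivial ingredient being the elementary characterisation of stars as the trees of diameter at most $2$ and the self-complementary nature of $P_4$.
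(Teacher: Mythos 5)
Your proposal is correct and follows essentially the same route as the paper: the corollary is deduced directly from Theorem~\ref{Th:GandGbar} by noting that $S_n$ has no induced $P_4$ (so it attains the minimum $2^n+n-1$), while every other tree has diameter at least $3$ and hence an induced $P_4$, which by the equality characterisation (together with the self-complementarity of $P_4$, already used in the proof of Theorem~\ref{Th:GandGbar}) forces the strict inequality. You have merely spelled out the details the paper leaves implicit in calling this an ``immediate consequence.''
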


Note that $\eta(S_n)>\eta(T)$ holds for all $n$-vertex trees $T\neq S_n$; see \cite[Corollary 3]{GreedyWagner}.

\medskip
A graph $H$ is said to be unicyclic if it is connected and $|V(H)|=|E(H)|$.

\begin{corollary}
Let $U_{n}$ be the graph obtained from $K_3$ and $S_{n-2}$ by merging a vertex of $K_3$ with the center of $S_{n-2}$. Then 
$$
\eta(U_n)+\eta(\overline{U_n})< \eta(H)+\eta(\overline{H})
$$
for all $n>2$-vertex unicyclic graphs $H$ different from $U_n$.
\end{corollary}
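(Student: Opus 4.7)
The plan is to invoke Theorem~\ref{Th:GandGbar}: since $P_4$ is self-complementary, a graph $G$ contains an induced $P_4$ if and only if $\overline{G}$ does, so equality in~\eqref{Eq:NOR} holds exactly when $G$ has no induced $P_4$. The corollary thus reduces to verifying that, among all $n$-vertex unicyclic graphs, $U_n$ is the unique one with no induced $P_4$.

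First I would check that $U_n$ itself has no induced $P_4$. Let $v$ be the vertex shared by the triangle and the star in $U_n$, let $a, b$ be the other two triangle vertices, and let $u_1, \dots, u_{n-3}$ be the leaves attached to $v$. Any four-vertex induced subgraph either contains $v$ -- in which case $v$ is adjacent to each of the other three vertices, so the subgraph has a vertex of degree $3$ and cannot be $P_4$ -- or avoids $v$, in which case the only edge that can appear is $ab$, so the subgraph has at most one edge.

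Second, I would show that for $n \geq 5$, every unicyclic graph $H \neq U_n$ of order $n$ contains an induced $P_4$. Let $C$ be the unique cycle of $H$ and $g$ its length; note $C$ is induced in $H$, since any chord would create a second cycle. If $g \geq 5$, four consecutive vertices of $C$ induce a $P_4$. If $g = 4$, then since $n \geq 5$ some vertex $w \notin V(C)$ is adjacent to a cycle vertex $c_1$ of $C = c_1c_2c_3c_4$, and $\{w, c_1, c_2, c_3\}$ induces $P_4$ (any chord $wc_2$, $wc_3$, or $c_1c_3$ would create a second cycle). If $g = 3$, write $C = c_1c_2c_3$ and let $T_i$ be the component containing $c_i$ after removing the triangle edges; if two of them, say $T_1$ and $T_2$, are non-trivial, then choosing neighbours $x \in T_1 \setminus \{c_1\}$ of $c_1$ and $y \in T_2 \setminus \{c_2\}$ of $c_2$ shows that $\{x, c_1, c_2, y\}$ induces $P_4$. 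Otherwise every non-cycle vertex lies in a single $T_i$, say $T_1$: if $T_1$ is a star centred at $c_1$ we recover $H = U_n$, else $T_1$ contains a vertex $z$ at distance $2$ from $c_1$, with some $y \in T_1 \setminus \{c_1\}$ adjacent to both $c_1$ and $z$, and then $\{z, y, c_1, c_2\}$ induces $P_4$ (the non-edges $zc_1, zc_2, yc_2$ are forced because $z, y \in T_1 \setminus \{c_1\}$).

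The main obstacle is the girth-$3$ subcase, where one must be careful to rule out both the configuration with pendent vertices at two distinct triangle vertices and the configuration with a branch of depth $\geq 2$ attached at a single triangle vertex; after that the unique $P_4$-free unicyclic graph is forced to be $U_n$. Combining the two steps with Theorem~\ref{Th:GandGbar} yields $\eta(U_n) + \eta(\overline{U_n}) = 2^n + n - 1 < \eta(H) + \eta(\overline{H})$ for every unicyclic $H \neq U_n$ of order $n \geq 5$, while the small cases $n = 3, 4$ can be checked by direct inspection.
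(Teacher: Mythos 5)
Your approach is the same as the paper's: the corollary is presented there as an immediate consequence of Theorem~\ref{Th:GandGbar}, via the observation that a graph attains the minimum $2^n+n-1$ exactly when it has no induced $P_4$ (and, as you note, $P_4$ being self-complementary means this condition is symmetric in $G$ and $\overline{G}$). What you add, usefully, is the verification that $U_n$ is $P_4$-free together with the uniqueness step that the paper leaves entirely implicit: that for $n\geq 5$ every other unicyclic graph of order $n$ contains an induced $P_4$. Your girth case analysis (girth at least $5$; girth $4$ with a pendant neighbour of the cycle; girth $3$ with either two non-trivial branches or a single branch of depth at least $2$) is correct and complete for $n\geq 5$, using in each case that a chord or a cross-edge would create a second cycle.

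One caveat: you dismiss $n=3,4$ as checkable by direct inspection, but inspection at $n=4$ actually refutes the corollary as stated. The cycle $C_4$ is a unicyclic graph of order $4$, is different from $U_4$ (the paw), and has no induced $P_4$; hence $\eta(C_4)+\eta(\overline{C_4})=2^4+4-1=19=\eta(U_4)+\eta(\overline{U_4})$, so the inequality is an equality rather than strict. This is a defect in the paper's hypothesis (which should read $n\geq 5$) rather than in your method, but since your write-up asserts that the small cases go through, you should flag the exception instead of waving it away. For $n=3$ the statement is vacuous, as $U_3=K_3$ is the only unicyclic graph of order $3$.
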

It should be mentioned that $\eta(U_n)>\eta(G)$ holds for all unicyclic graphs $G\neq  U_n$ of order $n$; see \cite[Theorem 10]{Audacegenral2018}.

\medskip
We say that a path $P$ in a graph $G$ is \emph{pendent} if one of its ends has degree $1$ in $G$ and all internal vertices have degree $2$ in $G$. A cut edge of a graph $G$ is an edge whose deletion increases the number of connected components of $G$ at least by one. It is clear that $e\in E(G)$ is a cut edge of $G$ if and only if $e$ is not contained in a cycle of $G$.

A cut vertex of a graph $G$ is a vertex whose deletion increases the number of (connected) components of $G$ at least by one.

\medskip
In the next theorem, we manage to obtain some partial characterisations of an $n$-vertex graph $G$ that maximises $\N(G) + \N(\overline{G})$.
 
\begin{theorem}\label{GeneGraphs}
Let $G$ be a graph of order $n> 4$. If  $\N(H)+\N(\overline{H}) \leq \N(G)+\N(\overline{G})$ for all $n$-vertex graphs $H$, then the following must hold:
\begin{enumerate}[1)] 
\item $G$ and $\overline{G}$ are connected.
\item $G$ and $\overline{G}$ have diameter $2$ or $3$.
\item $G$ and $\overline{G}$ have no pendent vertex and no cut vertex.
\end{enumerate}
\end{theorem}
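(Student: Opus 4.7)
I prove each of (1)--(3) by contradiction, in each case producing an $n$-vertex graph $G'$ with $\N(G')+\N(\overline{G'})>\N(G)+\N(\overline{G})$. All bookkeeping is done through the identity $\N(G)+\N(\overline{G})=2^n-1+n+|\mathcal{N}(G,\overline{G})|$ of Section~\ref{Sec:Pre}, so the task reduces to showing that the biconnected family $\mathcal{N}(G,\overline{G})$ strictly grows under the chosen surgery. Since $\N(G)+\N(\overline{G})$ is symmetric under complementation, it suffices in each case to argue for $G$; moreover, a pendent vertex in a connected graph on $n\ge3$ vertices forces its unique neighbour to be a cut vertex, so the "no pendent" clause of~(3) follows once "no cut vertex" is established.

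For~(1), suppose $G$ has components $C_1,\dots,C_r$ with $r\ge 2$. If every $C_i$ is a clique, then $G$ has no induced $P_4$, so Theorem~\ref{Th:GandGbar} forces the minimum value $\N(G)+\N(\overline{G})=2^n+n-1$; for $n\ge 5$ this is beaten by $P_n$ (which contains an induced $P_4$), contradicting maximality. Otherwise pick a non-universal vertex $u$ in some $C_i$ and any $v\in C_j$ with $j\ne i$, merge $C_i,C_j$ at $u,v$ into $B$ as in Lemma~\ref{Lem:Combine}, and form $G'$ by replacing $C_i\cup C_j$ in $G$ with $B\cup K_1$. Writing $X=C_i\cup C_j$, $X'=B\cup K_1$ and $Y$ for the union of the remaining components, the disjoint-union decompositions $G=X\cup Y$ and $G'=X'\cup Y$ make $\overline{G}=\overline{X}\vee\overline{Y}$ and $\overline{G'}=\overline{X'}\vee\overline{Y}$ joins. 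The elementary join identity $\N(A\vee B)=\N(A)+\N(B)+(2^{|V(A)|}-1)(2^{|V(B)|}-1)$, together with $|V(X)|=|V(X')|$, cancels the $\overline{Y}$-contributions and reduces the inequality to
\[
\N(X')+\N(\overline{X'})-\N(X)-\N(\overline{X})>0,
\]
which is precisely Lemma~\ref{Lem:Combine}(2), made strict by the non-universality of $u$.

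For~(2) and~(3) the common strategy is to add an edge $xy$ in $G$ (equivalently, delete $xy$ from $\overline{G}$) whose endpoints are far in $G$ and share many common $\overline{G}$-neighbours: the gains in $\mathcal{N}$ come from subsets $S\ni x,y$ with $\langle S\rangle_G$ disconnected but $\langle S\rangle_{G+xy}$ connected, while the losses come from $S\in\mathcal{N}(G,\overline{G})$ in which $xy$ is a bridge of $\langle S\rangle_{\overline{G}}$, and such $S$ are ruled out whenever $S$ contains a common $\overline{G}$-neighbour of $x$ and $y$. Concretely for~(2), take a shortest, hence induced, path $v_0v_1v_2v_3v_4$ realising $\diam(G)\ge4$ and set $G'=G+v_0v_4$; since $v_0v_2,v_2v_4\notin E(G)$, the vertex $v_2$ lies in $N_{\overline{G}}(v_0)\cap N_{\overline{G}}(v_4)$, so every $S$ containing $v_2$ is loss-free, while $\{v_0,v_1,v_2,v_4\}$, $\{v_0,v_1,v_3,v_4\}$, $\{v_0,v_2,v_3,v_4\}$ (and their extensions by vertices of $V(G)\setminus\{v_0,\dots,v_4\}$ chosen to preserve the required (dis)connectivities) are witnessed gains. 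This already accounts for the strict increase $39<42$ from $P_5$ to $C_5=P_5+v_0v_4$. For~(3), given a cut vertex $v$ with branches $D_1,\dots,D_k$ and $d_i\in D_i\cap N_G(v)$, one adds an edge between $D_1$ and $D_2$ chosen (whenever possible) outside $N_G(v)$, so that $v$ itself remains a common $\overline{G}$-neighbour of the new endpoints; the cross-branch subsets $S\subseteq D_1\cup D_2$ avoiding $v$ then furnish the gains while $v$ suppresses the losses.

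\textbf{Main obstacle.} The counting step of~(2) and~(3) is the heart of the matter: as the comparison $\N(P_5)+\N(\overline{P_5})=39>38=\N(P_5+v_2v_4)+\N(\overline{P_5+v_2v_4})$ shows, a poorly chosen edge addition (here the short chord $v_2v_4$, which does destroy the cut vertex $v_3$) can strictly \emph{decrease} the sum. A matching-style argument is therefore required, pairing each lost subset with a distinct gained subset by exploiting the shared-$\overline{G}$-neighbourhood structure at the chosen endpoints. I expect the combinatorics here to parallel the structural "$P_4$-free" analysis used in the proof of Theorem~\ref{Th:GandGbar}, and in~(3) to possibly reduce to an application of~(1) inside the two halves of $G$ glued at the cut vertex.
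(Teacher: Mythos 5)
Your treatment of part 1) is sound and is a mild variant of the paper's argument: you invoke the strictness criterion of Lemma~\ref{Lem:Combine}(2) directly (choosing $u$ non-universal in its component) and dispose of the all-cliques case via Theorem~\ref{Th:GandGbar}, whereas the paper always merges, obtains only a weak inequality, and then recovers strictness by attaching the resulting isolated vertex to a vertex of degree at least $2$ in the complement (Lemma~\ref{Lem:Indiv}). Both routes work; yours is slightly shorter. Your observation that ``no cut vertex'' implies ``no pendent vertex'' for connected graphs on at least $3$ vertices is also correct and reverses the paper's logical order (the paper proves absence of pendent vertices first and deduces absence of cut vertices from the intermediate claim that every cut vertex is adjacent to a pendent one).

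The genuine gap is in parts 2) and 3), and you have in effect flagged it yourself. For 2), your loss analysis only rules out sets $S$ containing the common non-neighbour $v_2$; sets $S\ni v_0,v_4$ with $v_2\notin S$ for which $v_0v_4$ is a bridge of $\langle S\rangle_{\overline{G}}$ remain uncontrolled, and the ``matching-style argument'' you defer to is never supplied. The paper closes exactly this hole without any matching: if $S\in\mathcal{N}(G,\overline{G})$ and $v_0v_4$ is a cut edge of $\langle S\rangle_{\overline{G}}$, then connectivity of $\langle S\rangle_G$ forces $S$ to contain a shortest $G$-path $v_0,u_1,u_2,u_3,\dots,v_4$ of length at least $4$ \emph{inside} $S$, and the edges $v_0u_3,\,u_3u_1,\,u_1v_4$ of $\overline{G}$ give a detour around $v_0v_4$, a contradiction. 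Hence there are no losses at all, the inclusion $\mathcal{N}(G,\overline{G})\subset\mathcal{N}(G_1,\overline{G_1})$ is strict, and your $P_5$-versus-$P_5+v_2v_4$ worry does not arise because the added edge joins vertices at distance at least $4$, not $2$. For 3), your one-sentence surgery hides the hard case behind ``whenever possible'': when a branch of the cut vertex lies entirely inside $N_G(v)$ (in particular when it is a single pendent vertex, which is forced once $\diam\le 3$ is known), no cross-branch edge with both ends outside $N_G(v)$ exists, and $v$ no longer suppresses the losses. This is precisely where the paper expends most of its effort: the two-case analysis showing every cut vertex is adjacent to a pendent vertex, Lemma~\ref{Nopend2} excluding pendent paths of length $2$, and the $s_G$/$r_G$ bookkeeping with the auxiliary graphs $H$, $H_1$, $H_2$, $H_3$ to eliminate pendent vertices altogether. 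None of that is reconstructed in your proposal, so parts 2) and 3) are not proved as written.
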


We will need the following lemma in the proof of Theorem~\ref{GeneGraphs}.

\begin{lemma}\label{Nopend2}
Let $G$ be a graph of order $n>4$ and with diameter $2$ or $3$ such that $\overline{G}$ is also connected. If $\N(H)+\N(\overline{H}) \leq \N(G)+\N(\overline{G})$ for all $n$-vertex graphs $H$, then neither $G$ nor $\overline{G}$ contains a pendent path of length $2$.
\end{lemma}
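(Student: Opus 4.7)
The plan is to argue by contradiction: assume that $G$ contains a pendent path $u-v-w$ of length $2$, with $\deg_G(u)=1$ and $\deg_G(v)=2$, and exhibit an $n$-vertex graph $H$ with $\eta(H)+\eta(\overline{H})>\eta(G)+\eta(\overline{G})$, contradicting maximality. First I would derive the structure forced by $\diam(G)\leq 3$: every vertex outside $\{u,v,w\}$ must lie at distance exactly $3$ from $u$ (the only shortest path out of $u$ goes $u\to v\to w\to\cdots$), which forces $w$ to be adjacent in $G$ to every vertex except $u$, and $\deg_G(w)=n-2$. Writing $V'=V(G)\setminus\{u,v,w\}$ and $G''=G[V']$, the graph $G$ is then entirely determined by $G''$, together with this fixed local structure around $u,v,w$.

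The transformation is $H:=G+uz$ for an arbitrary $z\in V'$ (which exists because $n>4$); in $H$, the vertex $u$ has degree $2$, breaking the pendent path. A case analysis on whether $v,w\in S$ among subsets $S\ni u,z$ for which $G[S]$ is disconnected but $H[S]=G[S]+uz$ is connected yields
\[
\eta(H)-\eta(G)=2^{n-4}+2\,\eta(G'')_z,
\]
the first term coming from subsets with $w\in S,\ v\notin S$ (where $w$ is universal in $V'$, so any $T\subseteq V'$ with $z\in T$ gives a connected trace), and the second term from subsets with $w\notin S$ (with or without $v$) whose $V'$-trace is $G''$-connected through $z$. Dually, $\eta(\overline{G})-\eta(\overline{H})$ counts subsets $S\ni u,z$ with $\overline{G}[S]$ connected and $uz$ a cut edge of $\overline{G}[S]$; exploiting that $u$ is $\overline{G}$-adjacent to everything except $v$, that $v$'s $\overline{G}$-neighborhood is exactly $V'$, and that $w$ is pendent in $\overline{G}$ with unique neighbor $u$, a case split on whether $v\in S$ and on whether $V'\cap S\setminus\{z\}\subseteq N_{G''}(z)$ yields exactly $2^{d(z)+1}+2$ such subsets, where $d(z)=|N_{G''}(z)|$. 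Since a vertex together with any subset of its neighborhood induces a connected subgraph, $\eta(G'')_z\geq 2^{d(z)}$, and so the net change
\[
\eta(H)+\eta(\overline{H})-\eta(G)-\eta(\overline{G})\ =\ 2^{n-4}+2\,\eta(G'')_z-2-2^{d(z)+1}\ \geq\ 2^{n-4}-2
\]
is strictly positive for $n\geq 6$, giving the desired contradiction.

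For the remaining case $n=5$, Step~1 pins $G$ down to one of exactly two graphs according as $G''=K_2$ or $G''=\overline{K_2}$, and a direct computation gives $\eta(G)+\eta(\overline{G})=39$ in both subcases; comparison with $\eta(C_5)+\eta(\overline{C_5})=42$ (using $C_5\cong\overline{C_5}$) exhibits a better graph, so $G$ is not maximal here either. Finally, since the lemma's hypotheses are symmetric in $G$ and $\overline{G}$, applying the same argument to $\overline{G}$ rules out a pendent path of length $2$ in $\overline{G}$ as well.

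The main obstacle will be the cut-edge count in $\overline{G}[S]$: the condition that $uz$ separates $u$ from $z$ in $\overline{G}[S]$ must be shown to force either $v\notin S$ with $V'\cap S\setminus\{z\}\subseteq N_{G''}(z)$ (contributing $2^{d(z)+1}$ subsets through independent choices of the refined $V'\cap S$ and of the membership of $w$), or $v\in S$ with $V'\cap S=\{z\}$ (contributing exactly the two subsets $\{u,v,z\}$ and $\{u,v,z,w\}$). Getting this bookkeeping right, and isolating the small case $n=5$ where the general bound $2^{n-4}-2$ degenerates to $0$ and a specific witness must instead be produced, are the two points where care is needed.
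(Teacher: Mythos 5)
Your proposal follows essentially the same route as the paper's: both use $\diam(G)\leq 3$ to force the attachment vertex $w$ to be adjacent to every vertex outside $\{u,v,w\}$, both then add an edge from the pendent vertex $u$ to a vertex $z$ of $V(G)\smallsetminus\{u,v,w\}$ and compare gains and losses of connected induced subgraphs (your counts are exact where the paper's are one-sided bounds, but they agree and yield the same conclusion for $n\geq 6$), and both isolate $n=5$ for separate treatment. The one caveat — shared with the paper, whose proof likewise only treats a pendent path lying in $G$ itself — is that the hypotheses are not literally symmetric in $G$ and $\overline{G}$ (nothing in the lemma's statement bounds $\diam(\overline{G})$), so your closing symmetry step, like the paper's implicit one, really rests on the context in which the lemma is invoked, namely after part 2) of Theorem~\ref{GeneGraphs} has been established.
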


\begin{proof}
The case when $\diam(G)=2$ is trivial. For the rest of the proof, we assume that $\diam(G)=3$. Suppose that $G$ contains a pendent path of length $2$, say $P=\langle v_0,v_1,v_2\rangle_G$ attached at $v_0$, that is $\deg_G(v_0)\geq 3$ and $\deg_G(v_2)=1$. Let $G_1=G-v_1-v_2$ and $x\in V(G_1-v_0)$. Define $G'=G+xv_2$. Clearly, $G'$ and $\overline{G'}=\overline{G}-xv_2$ are connected. Since $\diam(G)=3$, we have $x\in N_{G_1}(v_0)=V(G_1-v_0)$. Let us prove that $ |\mathcal{N}(G,\overline{G})| < |\mathcal{N}(G',\overline{G'})|$, thereby contradicting the maximality of $G$.

Suppose that $S \in \mathcal{N}(G,\overline{G})$ and that $S \notin \mathcal{N}(G',\overline{G'})$. Then $xv_2$ is a cut edge of $H:=\langle S\rangle_{\overline{G}}$. Any path in $G$ joining $x$ to $v_2$ must contain $v_0$ and $v_1$. Therefore, we must have $x, v_0,v_1,v_2\in V(H)= V(\overline{H})$ for the subgraph $\overline{H}$ of $G$ to be connected.
Denote by $H_1$ and $H_2$ the two components of $H-xv_2$ with $x\in V(H_1)$ and $v_2\in V(H_2)$. Thus $v_1\in V(H_1)$ as $xv_1\in E(H)$. Moreover, $V(H_1)$ cannot contain a vertex $y$ other than $x$ and $v_1$, otherwise there is a path $R$ joining $x$ to $y$ in $H_1$ and so $R+yv_2$ is an alternative path joining $x$ to $v_2$ in $H$. Hence, $V(H_1)=\{x,v_1\}$. Similarly, $V(H_2-v_2-v_0)=\emptyset$. Otherwise, if $z\in V(H_2-v_2-v_0)$, then there is a path $P$ that joins $v_2$ to $z$ in $H_2$. Then $P+zv_1+v_1x$ is an alternative path that joins $x$ to $v_2$. So, we must have $V(H)=\{v_0,v_1,v_2,x\}$, and 
\begin{align}
\label{Eq:PP1}
|\mathcal{N}(G,\overline{G})\smallsetminus\mathcal{N}(G',\overline{G'})|=1\,. 
\end{align}
On the other hand, for every $u  \in N_G(x)\smallsetminus\{v_0\}$, the graph $Q=\langle u,x,v_2,v_1 \rangle_{G'}$ is a path and thus $\{u,x,v_2,v_1\} \in\mathcal{N}(G',\overline{G'})$ while $\{u,x,v_2,v_1\} \notin \mathcal{N}(G,\overline{G})$. There are $2^{\deg_G(x)-1}-1$ choices for $Q$. Moreover, for every $y\in V(G_1-x-v_0)$, it is easy to check that 
		\begin{itemize}
			\item $\{y,x,v_0,v_2\}\in \mathcal{N}(G',\overline{G'})$ and $\{y,x,v_0,v_2\} \notin \mathcal{N}(G,\overline{G})$ if $y\notin N_G(x)$,
			\item $\{y,x,v_1,v_2\}\in \mathcal{N}(G',\overline{G'})$ and $\{y,x,v_1,v_2\} \notin \mathcal{N}(G,\overline{G})$ if $y\in N_G(x)$.
		\end{itemize}
Thus 
\begin{align}
\label{Eq:PP2}
|\mathcal{N}(G',\overline{G'})\smallsetminus \mathcal{N}(G,\overline{G})|\geq 2^{\deg_G(x)-1}-1 + |V(G_1-x-v_0)|\,.
\end{align}
Furthermore,~\eqref{Eq:PP1} and~\eqref{Eq:PP2} imply that  $|\mathcal{N}(G',\overline{G'})|>|\mathcal{N}(G,\overline{G})|$ for $|V(G_1-x-v_0)|> 1$ (i.e. $|V(G)|>5$), and contradicts the maximality of $G$. For $|V(G)|=5$, Table~\ref{Table1} shows that the unique maximal graph is the cycle. This completes the proof of the lemma.
\end{proof}

\begin{table}[h!]
	\caption{The only maximal graphs of order $5,6$ and $7$.}\label{Table1}
	\begin{tabular}{|c|c|c|c|}
		\hline 
		& $n=5$ & $n=6$ & $n=7$\\
		\hline 
		$G$ & \begin{tikzpicture}
		\node[fill=black,circle,inner sep=1pt] (t1) at (1,0) {};
		\node[fill=black,circle,inner sep=1pt] (t2) at (0.3,0.95) {};         	
		\node[fill=black,circle,inner sep=1pt] (t3) at (-0.80,0.58) {};
		\node[fill=black,circle,inner sep=1pt] (t4) at (-0.80,-0.58) {};
		\node[fill=black,circle,inner sep=1pt] (t5) at (0.30,-0.95) {};
		\draw (t1)--(t2)--(t3)--(t4)--(t5)--(t1);
		\end{tikzpicture}
		&
		\begin{tikzpicture}
		\node[fill=black,circle,inner sep=1pt] (t1) at (1,0) {};
		\node[fill=black,circle,inner sep=1pt] (t2) at (0.3,0.95) {};         	
		\node[fill=black,circle,inner sep=1pt] (t3) at (-0.80,0.58) {};
		\node[fill=black,circle,inner sep=1pt] (t4) at (-0.80,-0.58) {};
		\node[fill=black,circle,inner sep=1pt] (t5) at (0.30,-0.95) {};
		\node[fill=black,circle,inner sep=1pt] (t6) at (0,0) {};
		\draw (t1)--(t2)--(t3)--(t4)--(t5)--(t1);
		\draw (t2)--(t6)--(t5);
		\draw (t6)--(t1);
		\end{tikzpicture}
		&
		\begin{tikzpicture}
		\node[fill=black,circle,inner sep=1pt] (t1) at (1,0) {};
		\node[fill=black,circle,inner sep=1pt] (t2) at (0.3,0.95) {};         	
		\node[fill=black,circle,inner sep=1pt] (t3) at (-0.80,0.58) {};
		\node[fill=black,circle,inner sep=1pt] (t4) at (-0.80,-0.58) {};
		\node[fill=black,circle,inner sep=1pt] (t5) at (0.30,-0.95) {};
		\node[fill=black,circle,inner sep=1pt] (t6) at (0.5,0.3) {};
		\node[fill=black,circle,inner sep=1pt] (t7) at (-0.2,0) {};
		\draw (t1)--(t2)--(t3)--(t4)--(t5)--(t1);
		\draw (t2)--(t6)--(t5);
		\draw (t7)--(t6)--(t1);
		\draw (t3)--(t7)--(t4);
		\end{tikzpicture} \\
		\hline
		$\overline{G}$
		& 
		\begin{tikzpicture}
		\node[fill=black,circle,inner sep=1pt] (t1) at (1,0) {};
		\node[fill=black,circle,inner sep=1pt] (t2) at (0.3,0.95) {};         	
		\node[fill=black,circle,inner sep=1pt] (t3) at (-0.80,0.58) {};
		\node[fill=black,circle,inner sep=1pt] (t4) at (-0.80,-0.58) {};
		\node[fill=black,circle,inner sep=1pt] (t5) at (0.30,-0.95) {};
		\draw (t1)--(t3)--(t5)--(t2)--(t4)--(t1);
		\end{tikzpicture}
		& 
		\begin{tikzpicture}
		\node[fill=black,circle,inner sep=1pt] (t1) at (1,0) {};
		\node[fill=black,circle,inner sep=1pt] (t2) at (0.3,0.95) {};         	
		\node[fill=black,circle,inner sep=1pt] (t3) at (-0.80,0.58) {};
		\node[fill=black,circle,inner sep=1pt] (t4) at (-0.80,-0.58) {};
		\node[fill=black,circle,inner sep=1pt] (t5) at (0.30,-0.95) {};
		\node[fill=black,circle,inner sep=1pt] (t6) at (0,0) {};
		\draw (t1)--(t3)--(t5)--(t2)--(t4)--(t1);
		\draw (t3)--(t6)--(t4);
		\end{tikzpicture}
		&
		\begin{tikzpicture}
		\node[fill=black,circle,inner sep=1pt] (t1) at (1,0) {};
		\node[fill=black,circle,inner sep=1pt] (t2) at (0.3,0.95) {};         	
		\node[fill=black,circle,inner sep=1pt] (t3) at (-0.80,0.58) {};
		\node[fill=black,circle,inner sep=1pt] (t4) at (-0.80,-0.58) {};
		\node[fill=black,circle,inner sep=1pt] (t5) at (0.30,-0.95) {};
		\node[fill=black,circle,inner sep=1pt] (t6) at (0.5,0.3) {};
		\node[fill=black,circle,inner sep=1pt] (t7) at (-0.2,0) {};
		\draw (t1)--(t3)--(t5)--(t2)--(t4)--(t1);
		\draw (t3)--(t6)--(t4);
		\draw (t1)--(t7)--(t2);
		\draw (t5)--(t7);
		\end{tikzpicture}\\
		\hline
	\end{tabular}
\end{table}

\medskip

\begin{proof}[Proof of Theorem~\ref{GeneGraphs}]
From a computer search among all graphs of order $5,6,7$, respectively, we noticed that the three properties in Theorem~\ref{GeneGraphs} are consistently satisfied by the maximal graphs (see Table \ref{Table1}). 

For the proof, we assume that $n \geq 8$. Let $G$ be a $n$-vertex maximal graph, i.e., that maximises $\N(G)+\N(\overline{G})$ among all graphs of order $n$. By Remark~\ref{Rem:conn}, at least one of $G$ and $\overline{G}$ is connected.

We begin with the proof of 1). Suppose that $\overline{G}$ is connected and that $G$ is disconnected. For the sake of induction on the number of components of $G$, let us assume that $G$ has exactly two components, say $G'$ and $G''$. Let the graph $G_1$ be constructed from $G=G' \cup G''$ by first merging a fixed $u\in V( G')$ and a fixed $v\in V(G'')$, and adding a new isolated vertex $w$. Of course, $G_1$ is disconnected and $\overline{G_1}$ is connected. By Lemma~\ref{Lem:Combine}, we have
$$\N(G_1)+ \N(\overline{G_1})\geq \N(G)+ \N(\overline{G})\,.$$ If $\deg_{\overline{G_1}}(s)=1$ for all $s\in V(\overline{G_1}-w)$, then $G_1=K_{|V(G)|-1}\cup K_1$, and thus $\N(G_1)+\N(\overline{G_1})$ is minimum among all $n$-vertex graphs since $G_1$ is an union of complete graphs; see Theorem~\ref{Th:GandGbar}. Therefore, there exists a vertex $s_0\neq w$ such that $\deg_{\overline{G_1}}(s_0)\geq 2$. Define $G_2$ to be the graph $G_1+ws_0$. Then $G_2$ and $\overline{G_2}=\overline{G_1}-ws_0$ are both connected. Since $w$ is isolated in $G_1$ and $G_2-w=G_1-w$ (isomorphic graphs), we deduce from Lemma~\ref{Lem:Indiv} that $\N(G_2)+\N(\overline{G_2})> \N(G_1)+\N(G_1)$. This also implies that $$\N(G_2)+\N(\overline{G_2})> \N(G)+\N(G),$$ contradicting the maximality of $G$. The induction step follows from the fact that for any two graphs $H$ and $K$, 
$$
\eta(H\cup K)+\eta(\overline{H\cup K})
=\big(\eta(H)+\eta(\overline{H})\big) + \big(\eta(K)+\eta(\overline{K})\big)+ (2^{|V(H)|}-1)(2^{|V(K)|}-1)\,.
$$
This ends the proof of 1). For the rest of the proof, we assume that $G$ and $\overline{G}$ are both connected. 

\medskip
Let us prove 2). For contradiction, suppose that one of the graphs $G$ and $\overline{G}$, say $G$ has two vertices $u$ and $v$ such that $d_G(u,v)>3$. Then $uv \notin E(G)$. Consider a new graph $G_1:=G+uv$ and note that $\overline{G_1}=\overline{G}-uv$ is still connected. This is because $uv$ is contained in a cycle of $\overline{G}$: since $d_G(u,v)\geq 4$, the complement $\overline{P}$ of a shortest path $P$ from $u$ to $v$ in $G$ contains a cycle that contains the edge $uv$. Now we show that $\mathcal{N}(G,\overline{G})$ is a strict subset of $\mathcal{N}(G_1,\overline{G_1})$.

Suppose that $S \in \mathcal{N}(G,\overline{G})$ and that $S \notin \mathcal{N}(G_1,\overline{G_1})$. Then $u$ and $v$ necessarily belong to $S$ and $uv$ is a cut edge of $\langle S\rangle_{\overline{G}}$. However, since $\langle S\rangle_{G}$ is connected, it contains a shortest path $P$ of length greater than $3$ that joins $u$ to $v$. Denote by $u,u_1,u_2,u_3$ the first four vertices of $P$ from $u$. Then $uu_3,u_3u_1,u_1v\in E(\langle S \rangle_{\overline{G}})$, which contradicts the fact that $uv$ is a cut edge of $\langle S \rangle_{\overline{G}}$. Hence, the inclusion $ \mathcal{N}(G,\overline{G})\subseteq \mathcal{N}(G_1,\overline{G_1})$ is obtained. It remains to show that this inclusion is strict.

Then the set $\{v,u,u_1,u_2\}$ induces a path in $G_1$ and a disconnected graph in $G$. Since $\langle\{v,u,u_1,u_2\}\rangle_{\overline{G}}$ is also a path, we have $\{v,u,u_1,u_2\} \in \mathcal{N}(G_1,\overline{G_1})$, while $\{v,u,u_1,u_2\}\notin \mathcal{N}(G,\overline{G})$. Hence, the strict inclusion $ \mathcal{N}(G,\overline{G})\subset \mathcal{N}(G_1,\overline{G_1})$ follows. This proves that $|\mathcal{N}(G_1,\overline{G_1})|> |\mathcal{N}(G,\overline{G})|$, contradicting the maximality of $G$. This completes the proof of 2). For the rest of the proof, we assume that 1) and 2) hold.

\medskip
The proof of 3) is done in two main steps. In the first step, we prove that every cut vertex of $G$ (resp. $\overline{G}$) is adjacent to some pendent vertex of $G$ (resp. $\overline{G}$). In the second step, we prove that neither $G$ nor $\overline{G}$ has a pendent vertex.

\medskip
Without loss of generality, assume that $c$ is a cut vertex of $G$. Consider a component $U_1$ of $G-c$ chosen such that all vertices of $U_1$ are neighbors of $c$, that is $U_1\subseteq N_G(c)$. Such a choice is possible, given that $\diam(G)\leq 3$. Denote by $U_2$ the graph $(G-c)-V(U_1)$, and by $U_2^+$ the graph $G-V(U_1)$. Then there is no edge between a vertex of $U_1$ and a vertex of $U_2$ in $G$. Moreover, $U_2$ has a vertex $u_2$ that is not adjacent to $c$ in $G$, otherwise $c$ is isolated in $\overline{G}$: a contradiction to the assumption that $\overline{G}$ is connected. We fix $u_2$. Let $u_1\in V(U_1)$ be chosen to have maximum degree in $G$ among all vertices of $U_1$. Let us show that $u_1$ is a pendent vertex of $G$.

Suppose not. We distinguish two main cases.\\
\textbf{Case 1}: $\deg_G(u_1)=2$ and $u_2$ is the only vertex of $U_2$ that is not adjacent to $c$ in $G$.\\
Then $|U_1|=2$. Define a new graph $H:=G-u_1c$. Then the number of connected induced subgraphs of $G-u_2$ that have $u_1c$ as a cut edge is $2^{\deg_{U_2^+}(c)}=2^{\deg_{G}(c) -2}$, they are induced by $W\cup\{u_1,c\}$ for some $W\subseteq U_2$. The number of $u_2$-containing connected induced subgraphs of $G$ that have $u_1c$ as a cut edge is $$(2^{\deg_{U_2}(u_2)} -1)2^{\deg_{U_2^+}(c) - \deg_{U_2}(u_2)}=(2^{\deg_{G}(u_2)} -1)2^{\deg_{G}(c) -2 - \deg_{G}(u_2)}\,.$$ The sum of these two quantities is $A_1:=2^{\deg_{G}(c) -1} -2^{\deg_{G}(c) -2 - \deg_{G}(u_2)}$. On the other hand, the number of connected induced subgraphs of $\overline{H}=\overline{G}+u_1c$ that have $u_1c$ as a cut edge is $A_2:=2^{\deg_{U_2}(c)+1} -1=2^{\deg_{G}(c) -1} -1$ since none of these subgraphs can contain $u_2$. We deduce that
\begin{align*}
(\eta(G)+\eta(\overline{G})) - (\eta(H)+\eta(\overline{H}))=A_1-A_2=-2^{\deg_{G}(c) -2 - \deg_{G}(u_2)} +1 \leq 0\,.
\end{align*}
Even in the case of equality, we get a contradiction to 2) since $d_H(u_1,u_2)=4$.\\
\textbf{Case 2}: $\deg_G(u_1)>2$ or $u_2$ is not the only vertex of $U_2$ that is not adjacent to $c$ in $G$.\\
Define a new graph $H:=G+u_1u_2$. Let us bound $|\mathcal{N}(G,\overline{G})\smallsetminus\mathcal{N}(H,\overline{H})|$ from above. Let $S\in \mathcal{N}(G,\overline{G})$ such that $S\notin \mathcal{N}(H,\overline{H})$. Then the graphs $\langle S\rangle_{G}, \langle S\rangle_{H}$ and $K:=\langle S\rangle_{\overline{G}}$ are all connected, while $\langle S\rangle_{\overline{H}}$ is disconnected. Thus $u_1u_2$ is a cut edge of $K$ and $c \in V(K)$, otherwise $\langle S\rangle_{G}$ would be disconnected. We recall that in $\overline{G}$, there is an edge between every vertex of $U_1$ and every vertex of $U_2$. So we cannot have $u_1' \in V(U_1), u_2' \in V(U_2)$ for some $u_1'\neq u_1, u_2'\neq u_2$ such that $u_1',u_2'$ are both vertices of $K$, as otherwise $c$ would not be a cut vertex of $G$. Moreover, $u_2$ is not the only element of $U_2$ in $K$, otherwise $u_2$ is isolated in $\overline{K}$. Therefore, there is $u_2' \in V(U_2)$ such that $u_2' \neq u_2$ and $u_2'\in V(K)$, and thus $V(U_1) \cap V(K)=\{u_1\}$. Also, all vertices $x\in V(K)\cap V(U_2-u_2)$ are neighbors of $u_2$ in $G$, otherwise $\langle u_1,u_2,x\rangle_K$ is a triangle in $K$: a contradiction to the fact that $u_1u_2$ is a cut edge of $K$. Putting everything together, we deduce that the number of sets $S$ satisfying both $S\in \mathcal{N}(G,\overline{G})$ and $S \notin \mathcal{N}(H,\overline{H})$ is
$$
|\mathcal{N}(G,\overline{G})\smallsetminus\mathcal{N}(H,\overline{H})|\leq 2^{\deg_G{u_2}}-1.
$$ Let us bound $|\mathcal{N}(H,\overline{H})\smallsetminus\mathcal{N}(G,\overline{G})|$ from below. Since $\deg_G(u_1)=1+\deg_{U_1}(u_1)\geq 2$, let $z_1\in V(U_1)$ be a fixed neighbor of $u_1$ in $U_1$. For any nonempty subset $S' \subseteq N_G(u_2)$, we have the following:
\begin{itemize}
 \item $\langle \{z_1,u_1,u_2\}\cup S'\rangle_G$ is disconnected, because it contains $u_1$ and $u_2$ but not $c$. Thus $\{z_1,u_1,u_2\}\cup S'\notin \mathcal{N}(G,\overline{G}).$
 \item $\langle \{z_1,u_1,u_2\}\cup S'\rangle_H$ and $\langle \{z_1,u_1,u_2\}\cup S'\rangle_{\overline{H}}$ are connected since $u_2u_1,u_1z_1$ are edges in $H$ and $z_1u_2,z_1x,u_1x$ are edges in $\overline{H}$ for any $x\in S'$. So $\{z_1,u_1,u_2\}\cup S'\in \mathcal{N}(H,\overline{H}).$
\end{itemize}
Therefore we have
$\{z_1,u_1,u_2\}\cup S' \in \mathcal{N}(H,\overline{H})\smallsetminus \mathcal{N}(G,\overline{G})$. Hence
\begin{align}
\label{Eq:C1}
|\mathcal{N}(H,\overline{H})\smallsetminus\mathcal{N}(G,\overline{G})|\geq  2^{\deg_G{u_2}}-1\,,
\end{align}
which implies that
\begin{align}
\label{Eq:C2}
|\mathcal{N}(H,\overline{H})|\geq|\mathcal{N}(G,\overline{G})|\,.
\end{align}
The inequality in~\eqref{Eq:C1} and thus in ~\eqref{Eq:C2} is strict for the following reasons:
\begin{itemize}
\item If $\deg_G(u_1)>2$, then there is $z_2\in V(U_1) \cap N_G(u_1)$ such that $z_2 \neq z_1$. This vertex $z_2$ additionally contributes at least $2^{\deg_G{u_2}}-1$ to the bound in~\eqref{Eq:C1}.
\item If $\deg_G(u_1)=2$ and $u_2$ has a neighbor $y$ that is not adjacent to $c$, then an additional $1$, which accounts for the set $\{c, u_1,u_2,y\}$, can be added to the bound in~\eqref{Eq:C1}.
\item If $\deg_G(u_1)=2$ and all neighbors of $u_2$ are adjacent to $c$ and there is $y'\in V(U_2-u_2)$ such that $y'$ is not adjacent to $c$ in $G$, then $y'$ is not a neighbor of $u_2$ in $G$. Also, $y'$ and $c$ have a common neighbor, otherwise $d_G(y',c)=3$ and thus $\diam (G)=4$: a contradiction to 2). 
\begin{itemize}
\item If $N_G(u_2)\cap N_G(y')\neq \emptyset$, then for every $t\in N_G(u_2)\cap N_G(y')$, we have $\{u_1,u_2,t,y'\} \in \mathcal{N}(H,\overline{H}) \smallsetminus \mathcal{N}(G,\overline{G})$: it induces a $P_4$ in $H$ and thus also in $\overline{H}$, while it induces a disconnected subgraph in $G$ since  $u_1$ an $u_2$ are present but not $c$. In this case, the inequality in~\eqref{Eq:C1} is strict. 
\item If $N_G(u_2)\cap N_G(y')= \emptyset$, then there is a neighbor of $c$ in $U_2$ that is not adjacent to $u_2$ in $G$. Thus for every $t \in (N_{U_2}(c) \smallsetminus N_G(u_2)) \cap N_G(y')$, we have $\{u_1,u_2,c,t\} \in \mathcal{N}(H,\overline{H}) \smallsetminus \mathcal{N}(G,\overline{G})$: it induces a $P_4$ in both $H$ and $\overline{H}$, while $\langle \{u_1,u_2,c,t\}\rangle_{G}$ is disconnected ($u_2$ is isolated in it). In this case, the inequality in~\eqref{Eq:C1} is strict. 
\end{itemize} 
\end{itemize}

Since we obtain a contradiction in all possible cases, we conclude that $\deg_G(u_1)=1$ and thus every cut vertex of $G$ (resp. $\overline{G}$) is adjacent to a pendent vertex of $G$ (resp. $\overline{G}$) as $u_1 \in N_G(c)$. In particular, $G$ has no pendent path of length at least $3$: if $x,y,z,t$ are the last vertices of such path, appearing in this order and with $\deg_G(t)=1$, then $y$ would be a cut vertex that is not adjacent to a pendent vertex. Then by Lemma~\ref{Nopend2}, neither $G$ nor $\overline{G}$ has a pendent path of length at least $2$.

\medskip
Our next step is to prove that neither $G$ nor $\overline{G}$ has a pendent vertex. As was mentioned above, this implies that $G$ and $\overline{G}$ also do not have a cut vertex. This will complete the proof of 3). For this, we prove that if $G$ has a pendent vertex $u$ (and diameter $3$), then one can get a graph $H$ from $G$, by removing $u$ and attaching it to a set of other vertices such that $H$ satisfies
$$
\eta(G)+\eta(\overline{G})<\eta(H)+\eta(\overline{H})\,.
$$
But first, we introduce more notation. Suppose that $X$ and $Y$ are disjoint subsets of $V(G)$, and let $G'$ be obtained from $\langle X\cup Y\rangle_G$ by adding a new vertex $\mu$ adjacent to all (and only) vertices in $X$. We denote by $s_G(X, Y)$ the number of sets $Z\subseteq X\cup Y$ such that $Z\cap X\neq \emptyset,~Z\cap Y\neq \emptyset$ and $\langle \{\mu\}\cup Z\rangle_{G'}$ is connected, by $r_G(X,Y)$ the number of sets $Z\subseteq X\cup Y$ such that $Z\cap X\neq \emptyset,~Z\cap Y\neq\emptyset$ and $\langle Z\rangle_G$ has at least one edge from a vertex in $X$ to a vertex in $Y$. Clearly, $$s_G(X,Y)\leq r_G(X,Y)=r_G(Y,X)\,.$$

Suppose (to the contrary) that $G$ has a pendent vertex, say $u$. Denote by $v$ the neighbor of $u$ in $G$. Let $B=V(G)\smallsetminus(N_G[v])$ and $A=N_G(v)\smallsetminus \{u\}$. Clearly, $A \neq \emptyset$ (resp. $B\neq \emptyset$), otherwise $\langle\{u,v\}\rangle_G$ is isolated in $G$ (resp. $v$ is isolated in $\overline{G}$). Consider the connected graph $$H:=(G-uv)+\{ux: x\in B\}\,.$$ We prove that either $\N(H) +\N(\overline{H}) > \N(G) +\N(\overline{G})$ or $\N(H) +\N(\overline{H}) = \N(G) +\N(\overline{G})$ and $\diam(H)>3$. In each case, we get a contradiction to either the maximality of $G$, or to part 2) of the theorem. Clearly, $G-u$ is isomorphic to $H-u$; so we only compare the numbers of connected induced subgraphs that contain $u$. Note that$N_{\overline{G}}(u)=A\cup B, N_{\overline{G}}(v)=B$ and hence $\N(\overline{G}-v)_u=2^{n-2}$ and $\N(\overline{G})_{u,v}=(2^{|B|}-1)2^{|A|}$. So we have
\begin{align*}
\eta(G)_u=1+ \N(G-u)_v=1+2^{|A|}+s_{G}(A, B)\,,\\
\eta(\overline{G})_u=\N(\overline{G}-v)_u + \N(\overline{G})_{u,v}= 2^{n-2}+(2^{|B|}-1)2^{|A|}\,,
\end{align*}
and 
$$
\eta(G)_u+\eta(\overline{G})_u=1+2^{n-1}+s_G(A,B)\,.
$$
On the other hand, $N_H(u)=B, N_{\overline{H}}(u)=A\cup\{v\}, uv\in E(\overline{H}), N_{\overline{H}}(v)=B\cup\{u\}, \eta(H-v)_u=2^{|B|}+s_G(B,A), \eta(H)_{u,v}=r_G(B,A), \eta(\overline{H}-v)_u=2^{|A|}+s_{\overline{G}}(A,B), \eta(\overline{H})_{u,v}=2^{n-2}$, and therefore
\begin{align*}
\eta(H)_u&=\eta(H-v)_u+\eta(H)_{u,v}=2^{|B|}+s_G(B,A)+r_G(B,A),\\
\eta(\overline{H})_u&=\eta(\overline{H}-v)_u+\eta(\overline{H})_{u,v}=2^{|A|}+s_{\overline{G}}(A,B)+2^{n-2}\,.
\end{align*}
By Remark~\ref{Rem:conn},
\begin{align}\label{Eq:Neq1}
s_G(B,A)+s_{\overline{G}}(A,B)\geq (2^{|B|}-1)(2^{|A|}-1)=2^{n-2}-2^{|A|}-2^{|B|}+1\,,
\end{align}
where $(2^{|B|}-1)(2^{|A|}-1)$ counts the subsets of $A\cup B$ that have at least one element in $A$ and at least one element in $B$.
Therefore,
\begin{align*}
\eta(H)_u+\eta(\overline{H})_u
&=2^{|B|}+s_G(B,A)+r_G(B,A)+2^{|A|}+s_{\overline{G}}(A,B)+2^{n-2}\\
&\geq 2^{|B|}+r_G(B,A)+2^{|A|}+2^{n-2}-2^{|A|}-2^{|B|}+1+2^{n-2}\\
&\geq 1 +2^{n-1}+r_G(B,A) \geq \eta(G)_u+\eta(\overline{G})_u\,.
\end{align*}
Equality occurs in the last inequality if and only if $r_G(B,A)=s_G(A,B)$, while the bound in~\eqref{Eq:Neq1} is attained if and only if there is no subset of $A\cup B$ that is counted in both $s_G(B,A)$ and $s_{\overline{G}}(A,B)$.

Note that in the graph $G$ (thus in $H$) every element in $B$ has a neighbor in $A$, otherwise $\diam (G)\geq 4$. If $\eta(H)_u+\eta(\overline{H})_u>\eta(G)_u+\eta(\overline{G})_u$, then we obtain a contradiction to the choice of $G$. Otherwise, $\eta(H)_u+\eta(\overline{H})_u=\eta(G)_u+\eta(\overline{G})_u$ in which case we have $r_G(B,A)=s_G(A,B)$ and there is no subset of $A\cup B$ that is counted in both $s_G(B,A)$ and $s_{\overline{G}}(A,B)$; in particular, $A\cup B$ is counted in only one of $s_G(B,A)$ and $s_{\overline{G}}(A,B)$.  

Suppose that $A\cup B$ is not counted in $s_G(B,A)$. Then  there is a nonempty set $A_1\subseteq A$ such that $xz\notin E(H)$ (thus $xz\notin E(G)$) for all $x\in A_1$ and $z\in B\cup (A\smallsetminus A_1)$. So $v$ is a cut vertex of $H$. The set $A_2=A\smallsetminus A_1$ is nonempty, otherwise $G$ is disconnected. For every $x\in A_1$, we have $d_H(u,x)=1+d_H(b,a_2)+d_H(a_2,x)$ for some $b\in B$ and some $a_2 \in A_2$. Thus $d_H(u,x)\geq 4$, contradicting part 2). Therefore, $A\cup B$ is indeed counted in $s_G(B,A)$.

Suppose that $A\cup B$ is not counted in $s_{\overline{G}}(A,B)$. Then there is a nonempty set $B_1\subseteq B$ such that $yz \notin E(\overline{H})$ (thus $yz\in E(H)$ and $yz\in E(G)$) for all $y\in B_1$ and $z\in A\cup (B\smallsetminus B_1)$. So  $v$ is  a  cut vertex of $\overline{H}$: in the graph $\overline{H}-v$ there is no path to join a vertex in $B_1$ to a vertex in $V(\overline{H})\smallsetminus B_1$. Thus, $v$ must be adjacent to some pendent vertex $w$ in $\overline{H}$. We choose $B_1$ to be as large (in terms of cardinality) as possible, so that $w$ has to be in $B_1$. This means that $N_H(w)=V(H) \smallsetminus \{v,w\}$ and that $N_G(w)=A\cup (B\smallsetminus \{w\})$. The set $B\smallsetminus \{w\}$ is nonempty, otherwise $\langle w,v,u \rangle_{\overline{H}}$ is a pendent path of length $2$: impossible since $H$ is a maximal graph, i.e. $\eta(H)+\eta(\overline{H}) \geq \eta(J)+\eta(\overline{J})$ for all connected graphs $J$ of order $|V(H)|$.

Define $H_1$ to be the connected graph
$$H_1=G-(\{uv\}\cup \{wx: x\in A\})+ (\{ux:x\in A\} \cup \{uw\}) \,.$$ Since $G-u-w$ and $H_1-u-w$ are isomorphic graphs, we have to compare only the numbers of connected induced subgraphs that contain $u$ or $w$. Set $|A|=a$ and $|B|-1=b$. 
\begin{figure}[htbp]
\centering
\definecolor{qqqqff}{rgb}{0.,0.,1.}
\begin{tikzpicture}
\node[fill=black,label=above:{$u$},circle,inner sep=1pt] (t1) at (1.25,0) {};
\node[fill=black,label=above:{\small $v$},circle,inner sep=1pt] (t2) at (2.2,-0.5) {};
\node[fill=black,label=above:{\small $w$},circle,inner sep=1pt] (t5) at (6.75,1.75) {};
\draw[dashed] (3.5,-1.5) ellipse (0.75cm and 0.75cm);
\draw (t1)--(t2)--(3.7,-0.76);
\draw (t2)--(2.75,-1.5);
\node at (3.75,-1.5) {$A$};
\draw[dashed] (6.5,-1.5) ellipse (0.8cm and 0.8cm);
\node at (6.5,-1.45) {$B\smallsetminus \{w\}$};
\draw(3.73,-0.76)--(t5);
\draw(4.1,-2)--(t5);
\draw(t5)--(5.75,-1.5);
\draw(t5)--(7.25,-1.5);
\draw[dashed](3.5,-0.75)--(6.5,-0.75);
\draw[dashed](3.7,-2.25)--(6.4,-2.25);
\node[fill=black,label=above:{$u$},circle,inner sep=1pt] (t1) at (1.25+8,0) {};
\node[fill=black,label=above:{\small $v$},circle,inner sep=1pt] (t2) at (2.2+8,-0.55) {};
\node[fill=black,label=above:{\small $w$},circle,inner sep=1pt] (t5) at (6.75+8,1.75) {};
\node at (3.75+8,-1.5) {$A$};
\draw[dashed] (6.5+8,-1.5) ellipse (0.8cm and 0.8cm);
\draw[dashed] (3.5+8,-1.5) ellipse (0.75cm and 0.75cm);
\node at (6.5+8,-1.45) {$B\smallsetminus \{w\}$};
\draw[dashed](3.5+8,-0.75)--(7+8,-0.7);
\draw[dashed](3.7+8,-2.25)--(6.4+8,-2.25);
\draw (9.25,0) .. controls +(60:3cm) and +(150:1cm) .. (t5);
\draw (9.25,0) .. controls +(50:5cm) and +(90:1cm) .. (13.8,-1.5);
\draw (9.25,0) .. controls +(50:5cm) and +(90:2cm) .. (15.25,-1.5);
\draw (t2) .. controls +(-90:4.2cm) and +(-60:7cm) .. (t5);
\draw (t2) .. controls +(-90:3cm) and +(-90:1cm) .. (13.7,-1.5);
\draw (t2) .. controls +(-90:3.5cm) and +(-90:2cm) .. (15.3,-1.5);
\draw (t1) .. controls +(50:2cm) and +(90:0.5cm) .. (10.75,-1.5);
\draw (t1) .. controls +(50:2cm) and +(90:1cm) .. (12.25,-1.5);
\node at (4,-3.5) {$G$};
\node at (4+8,-3.5) {$\overline{G}$};
\node[fill=black,label=above:{$u$},circle,inner sep=1pt] (t1) at (1.25,0-6) {};
\node[fill=black,label=above:{\small $v$},circle,inner sep=1pt] (t2) at (2.2,-0.5-6) {};
\node[fill=black,label=above:{\small $w$},circle,inner sep=1pt] (t5) at (6.75,1.75-6) {};
\draw[dashed] (3.5,-1.5-6) ellipse (0.75cm and 0.75cm);
\draw (t2)--(3.7,-0.76-6);
\draw (t2)--(2.75,-1.5-6);
\node at (3.75,-1.5-6) {$A$};
\draw[dashed] (6.5,-1.5-6) ellipse (0.8cm and 0.8cm);
\node at (6.5,-1.45-6) {$B\smallsetminus \{w\}$};
\draw(t5)--(5.75,-1.5-6);
\draw(t5)--(7.25,-1.5-6);
\draw[dashed](3.5,-0.75-6)--(6.5,-0.75-6);
\draw[dashed](3.7,-2.25-6)--(6.4,-2.25-6);
\draw (t1) .. controls +(50:1cm) and +(90:1cm) .. (2.75,-1.5-6);
\draw (t1) .. controls +(50:1cm) and +(90:2cm) .. (4.25,-1.5-6);
\draw (t1) .. controls +(60:3cm) and +(150:1cm) .. (t5);
\node[fill=black,label=above:{$u$},circle,inner sep=1pt] (t1) at (1.25+8,0-6) {};
\node[fill=black,label=above:{\small $v$},circle,inner sep=1pt] (t2) at (2.2+8,-0.55-6) {};
\node[fill=black,label=above:{\small $w$},circle,inner sep=1pt] (t5) at (6.75+8,1.75-6) {};
\node at (3.75+8,-1.5-6) {$A$};
\draw[dashed] (6.5+8,-1.5-6) ellipse (0.8cm and 0.8cm);
\draw[dashed] (3.5+8,-1.5-6) ellipse (0.75cm and 0.75cm);
\node at (6.5+8,-1.45-6) {$B\smallsetminus \{w\}$};
\draw[dashed](3.5+8,-0.75-6)--(7+8,-0.7-6);
\draw[dashed](3.7+8,-2.25-6)--(6.4+8,-2.25-6);
\draw (9.25,0-6) .. controls +(50:5cm) and +(90:1cm) .. (13.8,-1.5-6);
\draw (9.25,0-6) .. controls +(50:5cm) and +(90:2cm) .. (15.25,-1.5-6);
\draw (t2) .. controls +(-90:4.2cm) and +(-60:7cm) .. (t5);
\draw (t2) .. controls +(-90:3cm) and +(-90:1cm) .. (13.7,-1.5-6);
\draw (t2) .. controls +(-90:3.5cm) and +(-90:2cm) .. (15.3,-1.5-6);
\draw (t5)--(11,-0.9-6);
\draw (t5)--(12.1,-2-6);
\node at (4,-3.5-6) {$H_1$};
\node at (4+8,-3.5-6) {$\overline{H_1}$};
\end{tikzpicture}
\caption{Sketches of the graphs $G$ and $\overline{G}$, enough to get Equation~\eqref{Eq:Gw}; Sketches of the graphs $H_1$ and $\overline{H_1}$, enough to get Equation~\eqref{Eq:H1}. Dashed connection means not necessarily all possible edges are present.}\label{Fig:ExtraExp}
\end{figure}
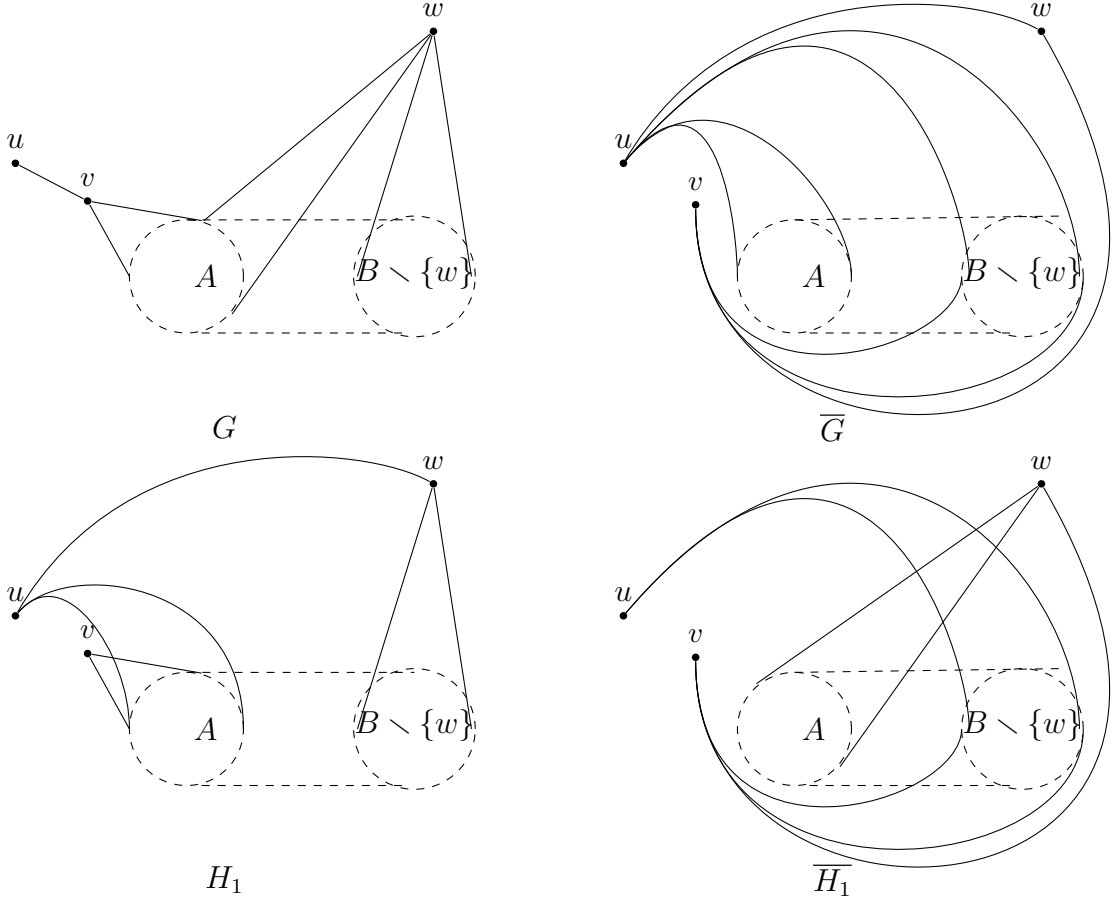
We have (see Figure~\ref{Fig:ExtraExp} for sketches of $G$ and $\overline{G}$)
\begin{align}
 \label{Eq:Gw}
\eta(G)_w&=\eta(G-v)_w + \eta(G)_{v,w} = 2^{a+b}+2(2^a-1)2^b\,, \\
\eta(G-w)_u&=1+ \eta(G-w)_{u,v}= 1+2^a + s_G(A,B\smallsetminus\{w\})\,,\nonumber\\
\eta(\overline{G})_w&=\eta(\overline{G})_{u,w} + \eta(\overline{G}-u)_w= 2\cdot 2^{a+b} + 1+ 2^b+s_{\overline{G}}(B\smallsetminus\{w\},A)\,, \nonumber\\
\eta(\overline{G}-w)_u&=  \eta(\overline{G}-w-v)_u + \eta(\overline{G}-w)_{u,v}  =2^{a+b}+2^a(2^b-1)\,.\nonumber
\end{align}
Thus
\begin{align*}
\eta(G)_w+\eta(G-w)_u+ & \eta(\overline{G})_w + \eta(\overline{G}-w)_u\\
&=7\cdot 2^{a+b}+ 2-2^b+ s_G(A,B\smallsetminus\{w\})+s_{\overline{G}}(B\smallsetminus\{w\},A)\,.
\end{align*}
On the other hand (see Figure \ref{Fig:ExtraExp} for sketches of $G$ and $\overline{G}$), 
\begin{align}
\label{Eq:H1}
\eta(H_1)_w&=\eta(H_1-u-v)_w + \eta(H_1-v)_{u,w} + \eta(H_1-u)_{v,w} + \eta(H_1)_{u,v,w}\\
& =2^b+s_{G}(B\smallsetminus\{w\},A)+ 2^{a+b} + r_G(B\smallsetminus\{w\},A)+(2^a-1)2^b\,,\nonumber\\
\eta(H_1-w)_u&=2^a +2^a-1 + 2\cdot s_G(A,B\smallsetminus\{w\})\nonumber\,,
\end{align}
and 
\begin{align*}
\eta(\overline{H_1})_w &=\eta(\overline{H_1}-u-v)_w + \eta(\overline{H_1}-v)_{u,w} + \eta(\overline{H_1})_{v,w} \\
&=2^a+s_{\overline{G}}(A,B\smallsetminus\{w\})+r_{\overline{G}}(A,B\smallsetminus\{w\})+2\cdot 2^{a+b} \,,\\
\eta(\overline{H_1}-w)_u&=2\cdot 2^b+2\cdot s_{\overline{G}}(B\smallsetminus\{w\},A)\,.
\end{align*}
Thus
\begin{align*}
\eta(H_1)_w+  & \eta(H_1-w)_u+  \eta(\overline{H_1})_w + \eta(\overline{H_1}-w)_u\\
&=4\cdot 2^{a+b}+ 2\cdot s_G(A,B\smallsetminus\{w\})+2\cdot s_{\overline{G}}(B\smallsetminus\{w\},A) + r_G(B\smallsetminus\{w\},A) \\
& + r_{\overline{G}}(A,B\smallsetminus\{w\}) + s_G(B\smallsetminus\{w\},A) + s_{\overline{G}}(A,B\smallsetminus\{w\}) +2^{b+1} + 3\cdot 2^a -1\,,
\end{align*}
and the inequality $s_G(X,Y)+s_{\overline{G}}(Y,X)\geq (2^{|X|}-1)(2^{|Y|}-1)$ yields
\begin{align*}
\eta(H_1)_w+  & \eta(H_1-w)_u+  \eta(\overline{H_1})_w + \eta(\overline{H_1}-w)_u \\
& \geq  7\cdot 2^{a+b} +2 -2^b +r_{\overline{G}}(B\smallsetminus\{w\},A)+r_G(A, B\smallsetminus\{w\})\\
& \geq  7\cdot 2^{a+b} +2 -2^b +s_{\overline{G}}(B\smallsetminus\{w\},A)+s_G(A,B\smallsetminus\{w\})\\
&=\eta(G)_w+\eta(G-w)_u+  \eta(\overline{G})_w + \eta(\overline{G}-w)_u\,.
\end{align*}

If 
\begin{align*}
\eta(H_1)_w+  \eta(H_1-w)_u+  & \eta(\overline{H_1})_w + \eta(\overline{H_1}-w)_u \\
& > \eta(G)_w+\eta(G-w)_u+  \eta(\overline{G})_w + \eta(\overline{G}-w)_u\,,
\end{align*}
then we are done, i.e. the proof of 3) is complete. Otherwise, 
\begin{align*}
\eta(H_1)_w+  \eta(H_1-w)_u+  & \eta(\overline{H_1})_w + \eta(\overline{H_1}-w)_u \\
& = \eta(G)_w+\eta(G-w)_u+  \eta(\overline{G})_w + \eta(\overline{G}-w)_u\,,
\end{align*}
which implies, in particular, that $s_G(A,B\smallsetminus\{w\})=r_G(A, B\smallsetminus\{w\})$, $s_{\overline{G}}(B\smallsetminus\{w\},A)=r_{\overline{G}}(B\smallsetminus\{w\},A)$, $A \cup B\smallsetminus\{w\}$ is counted in only one of $s_G(A,B\smallsetminus\{w\})$ and $s_{\overline{G}}(B\smallsetminus\{w\},A)$, and  $A \cup B\smallsetminus\{w\}$ is counted in only one of $s_G(B\smallsetminus\{w\},A)$ and $s_{\overline{G}}(A,B\smallsetminus\{w\})$.

If $A \cup B\smallsetminus\{w\}$ is not counted in $s_G(A,B\smallsetminus\{w\})=r_G(A, B\smallsetminus\{w\})$, then in the graph $G$ there is no edge between a vertex in $A$ and a vertex in $B\smallsetminus\{w\}$. In this case $d_G(u,\nu)=4$ for any $\nu \in B\smallsetminus \{w\}$ (a contradiction).

If $A \cup B \smallsetminus\{w\}$ is not counted in $s_{\overline{G}}(B\smallsetminus\{w\},A)=r_{\overline{G}}(B\smallsetminus\{w\},A)$, then there is no edge of $\overline{G}$ between a vertex in $A$ and a vertex in $B\smallsetminus\{w\}$. Fix $x\in A$ such that $x$ has the minimum degree in $G$ among all vertices in $A$. Define a new connected graph $H_2:=G-wx$. $H_2$ is connected because $N_{H_2}(w)=V(H_2-u-v-x)$ and $uv,vx,vy$ are edges of $H_2$ for any $y\in A\smallsetminus \{x\}$. The vertex $y$ exists, otherwise $A=\{x\}$ and $G$ has a pendent path of length $2$. To compare $\N(H_2)+\N(\overline{H_2})$ with $\N(G)+\N(\overline{G})=\N(H_1)+\N(\overline{H_1})$, it suffices to compare the number of connected induced subgraphs of $G$ that have $wx$ as a cut edge with those of $\overline{H_2}$ that have $wx$ as a cut edge. We do so in the following. Bearing in mind that $N_G(w)=A\cup (B\smallsetminus \{w\})$ and that $B \smallsetminus\{w\} \subset N_G(x)$, it is easy to see that the number of connected induced subgraphs of $G$ that have $wx$ as a cut edge is precisely $2+ 2^{|A \smallsetminus N_G[x]|}$. On the other hand, since $N_{\overline{H_2}}[w]=\{u,v,w\}$ and $N_{\overline{H_2}}(x)\smallsetminus \{u,w\} \subseteq A$, the number of connected induced subgraphs of $\overline{H_2}$ that have $wx$ as a cut edge is at least $$ 2 \cdot 2^{|A \smallsetminus N_G[x]|} + (2^b -1)2^{|A \smallsetminus N_G[x]|}\,.$$
This expression is obtained as follows. The number of choices that do not involve an element of $\{u\}\cup B\smallsetminus \{w\}$ is given by $2\cdot 2^{|A \smallsetminus N_G[x]|}$, the vertex $v$ can be included or not. The other term counts those connected induced subgraphs that involve both $v$ and at least an element of $B\smallsetminus \{w\}$.

 
Furthermore, since $b\geq 1$,
\begin{align*}
 2 \cdot 2^{|A \smallsetminus N_G[x]|} + (2^b -1)2^{|A \smallsetminus N_G[x]|} \geq 2+ 2^{|A \smallsetminus N_G[x]|}
\end{align*}
with equality if and only if $A \smallsetminus N_G[x] =\emptyset$ and $b=1$. 

Finally, suppose that  $A \smallsetminus N_G[x] =\emptyset$ and $b=1$. Then the set $A \cup B$ induces a complete graph in $G$, given that $x$ was chosen to have the minimum degree in $G$ among all vertices in $A$. We fix $y\in A$ and define a new graph $H_3:=G-vy$. The graph $H_3$ is connected since $N_G(v)=A\cup \{u\}$ and $|A|>2$. It is easy to see that the number of connected induced subgraphs of $G$ that have $vy$ as a cut edge is $2^3=8$, and that those of $\overline{H_3}$ that have $vy$ as a cut edge is $2^2 + 2^{a-1}$. Furthermore, $4 + 2^{a-1}>8$ since $n= a+4 \geq 8$. This shows that $\N(H_3)+\N(\overline{H_3})>\N(G)+\N(\overline{G})$, a contradiction to the maximality of $G$.

The theorem follows.

 \end{proof}

\section{$n$-vertex trees}\label{Sec:Tree}
If we restrict the study to trees, the lower bound $2^n+n-1$ in~\eqref{Eq:NOR} (Theorem~\ref{Th:GandGbar}) cannot be improved. This is because stars do not have an induced $P_4$ and thus they attain that lower bound. In fact, $S_n$ is the unique tree that minimises $\N(T)+\N(\overline{T})$ among all $n$-vertex trees $T$; see Corollary~\ref{Coro:Treeslower}. 

\medskip
In this section, we provide a sharp upper bound for $\N(T)+\N(\overline{T})$ where $T$ is a tree of order $n$. We begin with an intermediate result (Lemma~\ref{Lem:Stretch} below), which suggests that a maximal tree (a tree $T$ of order $n$ that maximises $\N(T)+\N(\overline{T})$) must have diameter $3$.

\begin{lemma}\label{Lem:Stretch}
Let $G$ be a connected graph, 
and $v\in V(G)$. Fix an integer $k\geq 1$. Let $H$ be the graph that results from merging $v$ with a pendent vertex $u$ of $S_{k+2}$, and $K$ the graph that results from merging $v$ with the center $c$ of $S_{k+2}$. Denote by $r$ the degree of $v$ in $\overline{G}$. If $r\geq 2$, then $$\N(K)+\N(\overline{K}) - (\N(H)+\N(\overline{H}))>0\,.$$.
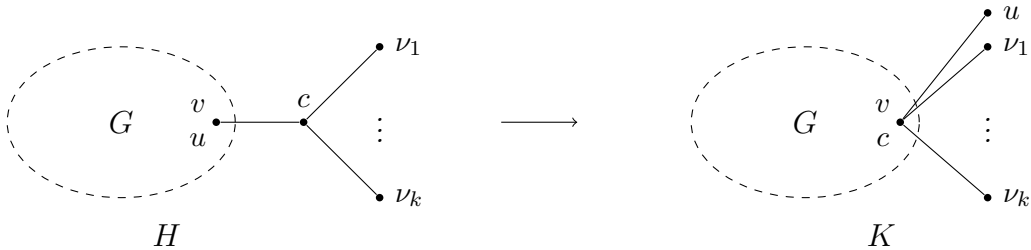
\begin{figure}[htbp]
\centering
\definecolor{qqqqff}{rgb}{0.,0.,1.}
\begin{tikzpicture}
\draw[dashed] (0,0) ellipse (1.5cm and 1cm);
\node[fill=black,label=left:{\small $\begin{array}{c}v\\u\end{array}\hspace{-0.25cm}$},circle,inner sep=1pt] (t1) at (1.25,0) {};
\node[fill=black,label=above:{\small $c$},circle,inner sep=1pt] (t2) at (2.4,0) {};
\node[fill=black,label=right:{\small $\nu_1$},circle,inner sep=1pt] (t3) at (3.4,1) {};
\node[fill=black,label=right:{\small $\nu_k$},circle,inner sep=1pt] (t4) at (3.4,-1) {};
\node at (3.4,0) {$\vdots$};
\node at (0,0) {$G$};
\draw (t1)--(t2)--(t3);
\draw (t2)--(t4);
\node at (0.6,-1.5) {$H$};
 \draw[->] (5,0)--(6,0);
\draw[dashed] (0+9,0) ellipse (1.5cm and 1cm);
\node[fill=black,label=left:{\small $\begin{array}{c}v\\c\end{array}\hspace{-0.25cm}$},circle,inner sep=1pt] (t1) at (1.25+9,0) {};
\node[fill=black,label=right:{\small $\nu_1$},circle,inner sep=1pt] (t3) at (3.4+8,1) {};
\node[fill=black,label=right:{\small $\nu_k$},circle,inner sep=1pt] (t4) at (3.4+8,-1) {};
\node[fill=black,label=right:{\small $u$},circle,inner sep=1pt] (t5) at (3.4+8,1.45) {};
\node at (3.4+8,0) {$\vdots$};
\node at (0+9,0) {$G$};
\draw (t3)--(t1)--(t4);
\draw (t1)--(t5);
\node at (10,-1.5) {$K$};
\end{tikzpicture}
\caption{The graphs $H$ and $K$ described in Lemma~\ref{Lem:Stretch}.}
\label{Fig:Stretch}
\end{figure}
\end{lemma}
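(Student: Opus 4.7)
The plan is to compute $\eta(K) - \eta(H)$ and $\eta(\overline{K}) - \eta(\overline{H})$ separately by partitioning connected induced subgraphs according to which of the ``new'' vertices they contain, and then combine the two into a single manifestly positive expression. Write $n = |V(G)|$ and $d = \deg_G(v)$, so that $n - 1 - d = r$.

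In $K$, the vertices $u,\nu_1,\ldots,\nu_k$ are all pendents attached at $v$. Partitioning connected induced subgraphs by their intersection with these $k+1$ pendents gives
\[
\eta(K) = \eta(G) + (k+1) + \eta(G)_v\,(2^{k+1} - 1),
\]
and an analogous split for $H$, according to membership of $c$ and of $\{\nu_1,\ldots,\nu_k\}$ (noting that $v$ accesses the attached star only through $c$), gives $\eta(H) = \eta(G) + k + 2^k + \eta(G)_v\cdot 2^k$, whence $\eta(K) - \eta(H) = (2^k - 1)(\eta(G)_v - 1)$.

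For $\overline{K}$, the $k+1$ added vertices form a clique, each of whose members is adjacent to every vertex of $V(G)\setminus\{v\}$; partitioning by the intersection with this clique and by membership of $v$, and noting that the only nontrivial connectivity requirement arises when $v$ is present alongside an added vertex (in which case $v$ must have an $\overline{G}$-neighbor inside the subgraph), one obtains
\[
\eta(\overline{K}) = \eta(\overline{G}) + (2^{k+1} - 1)\bigl(2^n - 2^{n-1-r}\bigr).
\]
A slightly finer partition of $\overline{H}$ by membership of $c$ and of $W=\{\nu_1,\ldots,\nu_k\}$, combined with the facts that $c$ is adjacent in $\overline{H}$ to precisely $V(G)\setminus\{v\}$ while each $\nu_i$ is adjacent in $\overline{H}$ to everything other than $c$, gives
\[
\eta(\overline{H}) = \eta(\overline{G}) + (2^k - 1)(2^{n+1} - 2) + 2^n - 2^{n-1-r},
\]
so that $\eta(\overline{K}) - \eta(\overline{H}) = -(2^k - 1)(2^{n-r} - 2)$. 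Adding this to $\eta(K) - \eta(H)$ collapses everything to the compact identity
\[
\eta(K) + \eta(\overline{K}) - \eta(H) - \eta(\overline{H}) = (2^k - 1)\bigl(\eta(G)_v + 1 - 2^{n-r}\bigr).
\]

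Since $k\geq 1$, positivity now reduces to the structural inequality $\eta(G)_v \geq 2^{n-r} = 2^{|N_G[v]|}$, which I would record as a short auxiliary lemma and prove by induction on $r$. In the base case $r=2$, connectivity of $G$ forces at least one of the two non-neighbors $y_1,y_2$ of $v$ to be adjacent to $N_G(v)$, and a short case analysis (according to whether $y_1y_2\in E(G)$ and to how each $y_i$ attaches to $N_G(v)$) shows that the $2^d$ subgraphs $\{v\}\cup T$ with $T\subseteq N_G(v)$, together with those obtained by additionally including $y_1$, $y_2$ or both, comprise at least $2^{d+1}$ connected induced subgraphs through $v$. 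For the inductive step $r\geq 3$, I would remove a vertex $y\in V(G)\setminus N_G[v]$ at maximum BFS distance from $v$: as a leaf of a BFS spanning tree rooted at $v$, $G-y$ remains connected, has $r-1\geq 2$ non-neighbors of $v$, and satisfies $N_{G-y}[v]=N_G[v]$, so the inductive hypothesis yields $\eta(G)_v \geq \eta(G-y)_v \geq 2^{|N_G[v]|}$. The main obstacle is the base case, since the bound is sharp (as when $v$ is an endpoint of $P_4$), but the extra $+1$ in the identity above ensures strict positivity of the lemma's conclusion regardless.
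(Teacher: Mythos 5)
Your computation is correct, and its skeleton coincides with the paper's: both arguments establish $\eta(K)-\eta(H)=(2^k-1)(\eta(G)_v-1)$, both evaluate $\eta(\overline{K})-\eta(\overline{H})$ by direct counting (you partition by the intersection with the whole added clique, the paper by membership of one fixed clique vertex and of $v$; the results agree), and both collapse the sum to $(2^k-1)\bigl(\eta(G)_v+1-2^{|N_G[v]|}\bigr)$. Where you genuinely diverge is the last step, the bound $\eta(G)_v\geq 2^{|N_G[v]|}$ when $r\geq 2$. The paper derives it from the estimate $\eta(G)_v\geq 2^{\deg_G(v)}+r\cdot 2^{\deg_G(v)-1}$, obtained by fixing a neighbour $w$ of $v$ having a neighbour $w'\notin N_G[v]$ and multiplying the number of connected induced subgraphs of $G-N_G[v]$ through $w'$ (claimed to be at least $r$) by the $2^{\deg_G(v)-1}$ subgraphs of $\langle N_G[v]\rangle_G$ containing $v$ and $w$. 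Your induction on $r$ --- base case $r=2$ settled by the case analysis on how the two non-neighbours attach to $N_G(v)$, inductive step by deleting a leaf of a BFS tree at maximum distance from $v$ --- is more elementary and arguably more robust: the paper's intermediate claim that $G-N_G[v]$ has at least $r$ connected induced subgraphs through $w'$ needs care when $G-N_G[v]$ is disconnected (take $v$ the centre of $P_5$: that count is $1$ while $r=2$, though the final bound still holds there), whereas your argument involves no such global count. Both routes exploit the same slack, namely the $+1$ in the combined identity, which turns the non-strict bound $\eta(G)_v\geq 2^{|N_G[v]|}$ into strict positivity of the difference.
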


\begin{proof}
Set $n=|V(H)|=|V(K)|$. The number of connected induced subgraphs of $H$ that do not contain the edge $uc$ is $2^k+k+\N(G)$ and thus $\N(H)=2^k+k+\N(G)+2^k \N(G)_v$. The number of connected induced subgraphs of $K$ that contain vertex $v$ is $2^{k+1}\N(G)_v$ and thus $\N(K)=2^{k+1}\N(G)_v+ k+1+\N(G-v)$.  With $\eta(G)-\eta(G-v)=\eta(G)_v$, these give
\begin{align}\label{Aster1}
\N(K)-\N(H)=(2^k-1)(\N(G)_v-1)\,.
\end{align}
The complement $\overline{H}$ of $H$ can be obtained by first adding all possible edges between a vertex of $K_k$ and a vertex of $\overline{G}$, and then adding all edges from an isolated vertex $c$ to a vertex of $\overline{G}-v$. By counting the connected induced subgraphs of $\overline{H}$ that contain $c$ but not $v$, $v$ but not $c$, both $c$ and $v$, and the rest, we obtain 
\begin{align*}
\eta(\overline{H}-v)_c&=1+2^k(2^{n-k-2}-1),\\
\eta(\overline{H}-c)_v&=\N(\overline{G})_v+2^{n-k-2}(2^{k}-1),\\
\eta(\overline{H})_{v,c}&=2^{n-k-2-r}(2^r-1)+(2^{n-k-2}-1)(2^{k}-1)\,,
\end{align*}
and $\N(\overline{H}-v-c)$, respectively. The quantity $2^{n-k-2-r}(2^r-1)+(2^{n-k-2}-1)(2^{k}-1)$ is obtained by first counting the connected induced subgraphs that contain none of the vertices of $K_k$: there are $2^{n-k-2-r}(2^r-1)$ of them since $\deg_{\overline{G}}v=r$. Such a subgraph has to contain a neighbor of $v$ in $G-v$ to be connected.
	
The complement $\overline{K}$ of $K$ can be obtained by taking the disjoint union of $K_{k+1}$ and $\overline{G}$, and adding all possible edges between $K_{k+1}$ and $\overline{G}-v$. Let $u$ be a fixed vertex of $K_{k+1}$ in $\overline{K}$. By counting the connected induced subgraphs of $\overline{K}$ that contain $u$ but not $v$, $v$ but not $u$, both $u$ and $v$, and the rest, we obtain $2^{n-2}$,
\begin{align*}
&\N(\overline{G})_v+2^{n-k-2-r}(2^{r}-1)(2^{k}-1)\,,\\
&2^{n-2-r}(2^r-1)\,,
\end{align*}
and $\N(\overline{K}-v-u)$, respectively. Note that $(\overline{H}-v)-c$ and $(\overline{K}-v)-u$ are isomorphic graphs. Thus,
\begin{align}\label{Aster2}
\begin{split}
\N(\overline{K})- \N(\overline{H})&=2^{n-2}+2^{n-k-2-r}(2^{r}-1)(2^{k}-1)+2^{n-2-r}(2^r-1)\\
& \hspace*{-2.5cm} - \big(1+2^k(2^{n-k-2}-1)+2^{n-k-2}(2^{k}-1)+ 2^{n-k-2-r}(2^r-1)+(2^{n-k-2}-1)(2^{k}-1)\big)\\
&=2(2^k -1) + 2^{-r}(2^{n-k-1}-2^{n-1})
\end{split}
\end{align}
after simplification. It follows from~\eqref{Aster1} and~\eqref{Aster2} that
$$
\N(K)+\N(\overline{K})-(\N(H)+\N(\overline{H}))=(2^k-1)(\N(G)_v+1)+2^{-r}(2^{n-k-1}-2^{n-1}).
$$
Note that the degree of $v$ in $G$ is $n-k-2-r$, and that $n-k-2-r>0$ since $G$ is connected. As per the statement of the lemma, we are only interested in the case where $r\geq 2$. Fix a neighbor $w$ of $v$ in $G$, that has a neighbor $w'\notin N_G(v)$. Such a vertex $w$ must exist, otherwise there would be no path to join $v$ to the other vertices not in $N_G(v)$. The number of connected induced subgraphs of
\begin{itemize}
\item $G-N_G[v]$ that contain $w'$ is at least $r$,
\item  $G-(G-N_G[v])$ that contain both $v$ and $w$ is precisely $2^{n-k-3-r}$.
\end{itemize}
Thus, these subgraphs contribute to $\N(G)_v$ by at least $r\cdot 2^{n-k-3-r}$, obtained by adding the edge $ww'$. On the other hand, the number of connected induced subgraphs that consist of $v$ and a subset of its neighbors in $G$ is precisely $2^{n-k-2-r}$. Therefore,
$$\N(G)_v \geq 2^{n-k-2-r}+r\cdot 2^{n-k-3-r}\,.$$ Hence, with $r\geq 2$, we have $r2^{n-k-3-r}\geq 2^{n-k-2-r}$ and 
\begin{align*}
\N(K)+\N(\overline{K})&-(\N(H)+\N(\overline{H}))\\
& \geq (2^k-1)(2^{n-k-2-r}+r2^{n-k-3-r} +1)+2^{-r}(2^{n-k-1}-2^{n-1})\\
& \geq (2^k-1)(2^{n-k-1-r} +1)+2^{-r}(2^{n-k-1}-2^{n-1})\\
& = 2^k-1>0\,.
\end{align*}
The lemma follows.
\end{proof}

\medskip
For two integers $t$ and $s$ such that $1\leq t \leq s$, we write $H^{t-1}_{s-1}$ for the tree with degree sequence $(s,t,1,1,\ldots,1)$. This tree has $t+s$ vertices of which $t+s-2$ are pendent provided that $t>1$.

\medskip
It is not difficult to see that $P_5$ and $H^1_2$ are both maximal trees for $n=5$. They are in fact the only trees of order $5$ that contain $P_4$. The maximal tree $T$ given in the next theorem satisfies properties 1) and 2) specified in Theorem \ref{GeneGraphs}, namely $T$ and $\overline{T}$ are both connected and of diameter $3$.

\begin{theorem}\label{TREE}
Let $T$ be a tree of order $n>5$. Then 
$$
\N(T)+\N(\overline{T})\leq \N\left(H_{\lceil (n-2)/2\rceil}^{\lfloor (n-2)/2\rfloor} \right)+\N\left(\overline{H_{\lceil (n-2)/2\rceil}^{\lfloor (n-2)/2\rfloor}}\right)=5\cdot 2^{n-2}-2^{\lceil (n-2)/2\rceil}-2^{\lfloor (n-2)/2\rfloor }+n\,.
$$
Equality holds if and only if $T$ and $H_{\lceil (n-2)/2\rceil}^{\lfloor (n-2)/2\rfloor} $ are isomorphic trees.
\end{theorem}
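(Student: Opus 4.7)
My plan is to argue in two stages: first, show that any tree $T$ on $n>5$ vertices maximising $\eta(T)+\eta(\overline{T})$ has diameter exactly $3$; second, optimise over the family of diameter-$3$ trees, which are precisely the double brooms $H^{t-1}_{s-1}$ with $s+t=n$ and $s,t\geq 2$.

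For the diameter reduction, $\diam(T)=2$ forces $T=S_n$, which by Corollary~\ref{Coro:Treeslower} is the (unique) minimiser rather than the maximiser. To rule out $\diam(T)\geq 4$, take a diametral path $v_0 v_1\cdots v_d$ with $d\geq 4$. Every neighbor of $v_{d-1}$ other than $v_{d-2}$ must be a leaf, so the pendent subtree at $v_{d-1}$ is a star $S_{k+2}$ (with $k\geq 1$) attached at $v=v_{d-2}$ via its pendent vertex. Lemma~\ref{Lem:Stretch} then rebuilds it attached at the star's centre, producing a tree $K$ of strictly smaller diameter with strictly larger $\eta+\eta(\overline{\cdot})$, provided $r=\deg_{\overline{G}}(v)\geq 2$ where $G=T-v_{d-1}-\{\text{pendent neighbors of }v_{d-1}\}$. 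A short cycle-free observation forces $v_{d-2}$ to be non-adjacent in $G$ to each of $v_0,\ldots,v_{d-4}$, so $r\geq d-3$; this handles every $d\geq 5$ in one stroke. The lone borderline $d=4$ with $r=1$ admits a tight edge count that pins the exceptional trees down to a single rigid configuration $T^*$ (a central vertex of degree $n-3$ carrying two length-$2$ pendent paths plus $n-5$ direct leaves). For $T^*$ one may apply Lemma~\ref{Lem:Stretch} at the opposite end of the diametral path whenever the associated fan has $\geq 2$ pendents; the sole remaining case (one pendent at each end) is dispatched by a direct computation of $\eta(T^*)+\eta(\overline{T^*})$ and comparison against the balanced double broom identified in the next paragraph.

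Once $\diam(T)=3$, let $u,v$ denote the two non-leaf vertices of $T=H^{t-1}_{s-1}$, with pendent neighbors $\{u_i\}_{i=1}^{s-1}$ and $\{v_j\}_{j=1}^{t-1}$. A direct partition of connected induced subgraphs by which of $u,v$ are present gives
\begin{equation*}
\eta(H^{t-1}_{s-1})=(n-2)+2^{s-1}+2^{t-1}+2^{n-2}.
\end{equation*}
In the complement the $n-2$ pendents form a clique $K_{n-2}$, $u$ is adjacent precisely to the $v_j$'s and $v$ precisely to the $u_i$'s, and $u\not\sim v$. The analogous four-case split (presence of $u$, of $v$, of both, of neither, together with the connectivity conditions imposed by the edge $u$-$v_j$ and $v$-$u_i$ structure) yields $\eta(\overline{H^{t-1}_{s-1}})=4\cdot 2^{n-2}+2-2^s-2^t$, and adding the two gives
\begin{equation*}
\eta(H^{t-1}_{s-1})+\eta(\overline{H^{t-1}_{s-1}})=n+5\cdot 2^{n-2}-2^{s-1}-2^{t-1}.
\end{equation*}

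Finally, $s\mapsto 2^{s-1}+2^{n-s-1}$ is strictly convex and symmetric about $s=n/2$, so its unique integer minimiser (up to the swap $s\leftrightarrow t$) over $2\leq s\leq n-2$ is $s=\lceil n/2\rceil$, $t=\lfloor n/2\rfloor$. This pins down the unique extremal tree as $H^{\lfloor(n-2)/2\rfloor}_{\lceil(n-2)/2\rceil}$ and produces the advertised value. The main technical obstacle is the $d=4$ borderline in the diameter reduction: although Lemma~\ref{Lem:Stretch} kills $d\geq 5$ at once, the rigid $T^*$ configuration falls outside the lemma's hypothesis and must be ruled out by a bespoke comparison; everything else is bookkeeping.
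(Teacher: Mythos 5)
Your proposal is correct and its skeleton is the same as the paper's: iterate Lemma~\ref{Lem:Stretch} to drive a maximal tree down to diameter $3$, note that diameter-$3$ trees are exactly the double brooms, and minimise $2^{s-1}+2^{t-1}$ by convexity; your formulas for $\eta(H^{t-1}_{s-1})$ and $\eta(\overline{H^{t-1}_{s-1}})$ agree with the paper's~\eqref{Eq:Hrs}. The genuine difference is your treatment of the diameter reduction, and it is a real improvement: the paper simply asserts that for a terminal tree of diameter at least $4$ the decomposition at $v_3$ has $r\geq 2$, but for the spider $T^*$ consisting of a centre carrying two pendent paths of length $2$ and $n-5$ pendent edges, every admissible decomposition has $r=1$, so Lemma~\ref{Lem:Stretch} never fires and the paper's parenthetical justification fails for exactly this family. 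Your proposal is the only one of the two that isolates this configuration, and the ``direct computation'' you defer does close it: a subtree of a tree has connected complement if and only if it is not a star, and the stars of $T^*$ on at least two vertices number $2^{n-3}+3$ out of its $9\cdot 2^{n-5}+1$ subtrees of size at least two, giving $|\mathcal{N}(T^*,\overline{T^*})|=5\cdot 2^{n-5}-2$, strictly below the balanced double broom's $|\mathcal{N}|=2^{n-2}-2^{\lceil (n-2)/2\rceil}-2^{\lfloor (n-2)/2\rfloor}+1$ for every $n\geq 6$. Two points to tighten in a final write-up: your bound $r\geq d-3$ counts only the path vertices $v_0,\dots,v_{d-4}$, so for $d=4$ the step ``$r=1$ pins down $T^*$'' needs the additional observation that $r=1$ also forbids any further grandchild of the centre outside the deleted branch (this is what excludes a third depth-$2$ branch, or a second leaf below $v_1$), after which the second application at the opposite end reduces to the one-pendent-per-end case; and the $T^*$ comparison must actually be written out, since it is the one case Lemma~\ref{Lem:Stretch} cannot reach.
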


\begin{proof}
First, note that the graph transformation that takes $H$ to $K$ in Lemma~\ref{Lem:Stretch} (see Figure \ref{Fig:Stretch}) increases the number of pendent vertices at least by $1$ and does not increase the diameter. 

In view of Corollary \ref{Coro:Treeslower}, we know that the only tree $T$ of order $n>5$ and diameter $2$ is the star and it satisfies the inequality. For the rest of the proof we only consider trees of diameter at least $3$.

Let $T$ be a tree of order $n>5$ that is not a star. Starting with $T$, we iterate Lemma~\ref{Lem:Stretch} for the newly constructed trees until it is no longer possible to apply Lemma~\ref{Lem:Stretch}. Let $T'$ be the resulting tree.

Suppose that $T'$ has diameter at least $4$. Let  $P$ be a longest (and hence induced) path in $T'$ with vertices $v_1,v_2,v_3,v_4,v_5,\dots,v_k$ from one end vertex to the other. The vertex $v_1$ has to be of degree $1$, and $v_2$ has only one neighbor in $T'$ whose degree is greater than $1$. Hence, we can have a Lemma~\ref{Lem:Stretch} decomposition of $T'$ with $v$ being $v_3$ and thus $r\geq 2$ (in $\overline{T'}$ the vertex $v_3$ is adjacent to $v_5$ and at least another vertex in $V(\overline{T'})\smallsetminus\{v_1,v_2,v_3,v_4,v_5\}$). This contradicts the assumption on $T'$. Hence $T'$ has diameter $3$ and hence $T'$ is isomorphic to $H^{t}_s$ for some $t,s$ such that $t+s=n-2$. Let $u$ and $v$ be the vertices of $H^{t}_s$ such that $\deg_{H^{t}_s}(u)=s$ and $\deg_{H^{t}_s}(v)=t$.

\medskip
The complement of $H^t_s$ can be obtained by joining two isolated vertices $u$ and $v$ to the complete graph $K_{t+s}$ such that $u$ and $v$ have degrees $t$ and $s$, respectively, and their open neighborhoods form a partition of $V(K_{t+s})$. Thus, by categorising subgraphs by the following cases: containing $u$ and $v$; $u$ and not $v$; $v$ and not $u$; neither $u$ nor $v$, we get 
$\N(H_s^t)_{u,v}=2^{t+s}, \N(H_s^t-v)_{u}=2^{s}, \N(H_s^t-u)_{v}=2^{t}, \N(H_s^t-u-v)=t+s$
and thus
$$ \N(H_s^t)=2^{t+s}+2^t+2^s+(t+s)\,.$$ By similar way, we have
$
\N(\overline{H_s^t})_{u,v}=(2^t-1)(2^s-1), \N(\overline{H_s^t}-u)_{v}=1+ 2^t(2^s-1), \N(\overline{H_s^t}-v)_{u}=1+ 2^s(2^t-1), \N(\overline{H_s^t}-u-v)=2^{t+s}-1
$
and 
$$\N(\overline{H_s^t})=(2^t-1)(2^s-1) +  (1+ 2^t(2^s-1)) + (1+ 2^s(2^t-1) )   + (2^{t+s}-1)\,.$$
Therefore, we have
\begin{align}\label{Eq:Hrs}
\N(H_s^t)+\N(\overline{H_s^t})= 2^{n}+ 2^{n-2}-2^t-2^s+n
\end{align}
which attains its maximum $2^{n}+ 2^{n-2} - 2^{\lfloor (n-2)/2\rfloor} -  2^{\lceil (n-2)/2\rceil}+n$ only at $t=\lfloor (n-2)/2\rfloor$. This completes the proof of the theorem.
\end{proof}

\section{Concluding remarks}

In this work, we studied the total number, $\N(G) +\N(\overline{G})$, of connected induced subgraphs of $G$ and its complement $\overline{G}$. In the case where $G$ is an $n$-vertex tree, we determined both the unique $n$-vertex tree that minimises  $\N(G) +\N(\overline{G})$ and the unique $n$-vertex tree that maximises $\N(G) +\N(\overline{G})$, and also gave formulas for the corresponding extreme values. For general graphs $G$ of order $n$, we proved that $\N(G) +\N(\overline{G})$ is minimum if and only if neither $G$ nor $\overline{G}$ has an induced path of length $3$. Unfortunately, the maximisation counterpart appears to be hard, and we only managed to obtain some partial characterisations, namely that $G$ and $\overline{G}$ are both of diameter at most $3$ and none of them has a cut vertex.

\medskip
On the other hand, we conducted an exhaustive computer search among all graphs of order $n$ for small values $n=5,6,7$, see Table \ref{Table1}. This suggests (also based on the ideas of the proof of Theorem~\ref{GeneGraphs}, where often we add an edge between vertices ``too'' distant apart to increase $\eta(G)+\eta(\overline{G})$) the following conjecture:

\begin{conjecture}
Let $n\geq 5$ be an integer. If $G$ is an $n$-vertex graph that satisfies 
$$
\eta(G)+\eta(\overline{G})\geq \eta(H)+\eta(\overline{H})
$$ 
for all graphs $H$ of order $n$, then $G$ is of diameter $2$.
\end{conjecture}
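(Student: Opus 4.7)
The plan is to argue by contradiction. Assume $G$ is an $n$-vertex maximal graph with $\diam(G)=3$; by Theorem~\ref{GeneGraphs} we know $\diam(G)\in\{2,3\}$, both $G$ and $\overline{G}$ are connected, and neither has a pendent vertex nor a cut vertex. Fix $u,v\in V(G)$ with $d_G(u,v)=3$ and a shortest $u$-$v$ path $P=u\,u_1\,u_2\,v$; in particular $N_G(u)\cap N_G(v)=\emptyset$, and among $\{u,u_1,u_2,v\}$ the non-edges of $G$ are exactly $uu_2$, $u_1v$, $uv$, so $\{u,u_1,u_2,v\}$ induces a $P_4$ in both $G$ and $\overline{G}$. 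The goal is to exhibit a graph $H$ with $\eta(H)+\eta(\overline{H})>\eta(G)+\eta(\overline{G})$, contradicting maximality.

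The first candidate is $H:=G+uv$, so $\overline{H}=\overline{G}-uv$. As in the proof of Theorem~\ref{GeneGraphs}(2), $\overline{H}$ stays connected because $uv$ sits on a short cycle of $\overline{G}$ (e.g.\ the path $P$ becomes a $4$-cycle in $\overline{H}\cup\{uv\}$ containing $uv$, since $uu_2,u_1v\in E(\overline{G})$). Using the bijective identity $\eta(G)+\eta(\overline{G})=2^n-1+n+|\mathcal{N}(G,\overline{G})|$ recorded in Section~\ref{Sec:Pre}, I would compare $|\mathcal{N}(G,\overline{G})|$ with $|\mathcal{N}(H,\overline{H})|$. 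The loss set $\mathcal{N}(G,\overline{G})\setminus\mathcal{N}(H,\overline{H})$ consists of those $S\ni u,v$ for which $uv$ is a cut edge of $\langle S\rangle_{\overline{G}}$: such an $S$ must admit a partition $S=X\sqcup Y$ with $u\in X$, $v\in Y$, every $X$-$Y$ pair adjacent in $G$, and both $\langle X\rangle_{\overline{G}}$, $\langle Y\rangle_{\overline{G}}$ connected. The gain set $\mathcal{N}(H,\overline{H})\setminus\mathcal{N}(G,\overline{G})$ consists of $S\ni u,v$ where $u,v$ lie in different components of $\langle S\rangle_G$ while $\langle S\rangle_{\overline{G}}-uv$ remains connected.

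Next I would bound the gain from below using the no-cut-vertex structure of $\overline{G}$. Since $\overline{G}$ is $2$-connected with $uv\in E(\overline{G})$, there are two internally disjoint $u$-$v$ paths in $\overline{G}$; this produces many sets $S$ containing $u$ and $v$ whose induced subgraph in $\overline{H}$ stays connected after deleting $uv$. I would then pair each loss $S=X\sqcup Y$ with gains constructed by suitably swapping neighborhoods of $u$ and $v$ (for example, replacing $X$ with $X\cup\{u_2\}\setminus(\text{some vertex})$ to break the complete bipartite structure in $G$ while preserving connectivity in $\overline{H}$), yielding an injection from losses into gains with non-trivial cokernel. If this does not close the gap, I would try the two-edge modification $H':=G+uv-u_1u_2$: it preserves $\diam$ and connectivity of both sides (because $u_1$ and $u_2$ are no longer cut vertices after the addition), and the extra deleted edge produces further gains in $\overline{H'}$, including $P_4$'s on $\{u_1,u,v,u_2\}$ that were previously obstructed.

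The main obstacle is a tight upper bound on the loss count. When $\diam(G)=3$ the diametral $P_4$ on $\{u,u_1,u_2,v\}$ itself is a loss (it lies in $\mathcal{N}(G,\overline{G})$ but not in $\mathcal{N}(H,\overline{H})$), and each ``complete bipartite bridge'' $X\sqcup Y$ in $G$ through $uv$ produces a loss whose size can scale with $\min(|N_G(u)|,|N_G(v)|)$. Making the gain strictly dominate likely requires choosing the pair $(u,v)$ extremally—for instance a diametral pair $(u,v)$ maximising $|N_G(u)\triangle N_G(v)|$ or minimising the number of common $G$-neighbours of $N_G(u)$ and $N_G(v)$—and exploiting the fact (from Theorem~\ref{GeneGraphs}(3)) that $u,v$ each have degree at least $2$ in both $G$ and $\overline{G}$. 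Handling the degenerate cases in which the straightforward edge-addition produces equality, and then ruling them out using the pendent-path-length argument from Lemma~\ref{Nopend2}, is the most delicate step and is where the argument is most likely to require an auxiliary lemma tailored to diameter-$3$ maximal graphs.
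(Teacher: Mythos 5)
The statement you are addressing appears in the paper only as a \emph{conjecture}; the authors do not prove it, and they explicitly flag why it is hard: adding an edge between two vertices at distance $3$ does not always increase $\eta(G)+\eta(\overline{G})$ (their example is $P_4$ versus $C_4$). Your proposal runs head-on into exactly this obstruction and does not overcome it. The whole argument hinges on showing that, for $H:=G+uv$, the gain set $\mathcal{N}(H,\overline{H})\smallsetminus\mathcal{N}(G,\overline{G})$ strictly outweighs the loss set $\mathcal{N}(G,\overline{G})\smallsetminus\mathcal{N}(H,\overline{H})$, but you never construct the promised injection from losses into gains: it is described only as ``suitably swapping neighborhoods'' and ``replacing $X$ with $X\cup\{u_2\}$ minus some vertex,'' with no verification that the resulting sets induce connected subgraphs in both $H$ and $\overline{H}$, that the map is injective, or that its image misses at least one genuine gain. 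You yourself identify the loss count as the ``main obstacle'' and leave it unbounded. Indeed, every $S=X\sqcup Y$ with $u\in X$, $v\in Y$, all $X$--$Y$ pairs adjacent in $G$, and $\langle X\rangle_{\overline{G}}$, $\langle Y\rangle_{\overline{G}}$ connected is a loss, and nothing in Theorem~\ref{GeneGraphs} controls how many such sets a diameter-$3$ maximal graph can carry; note that the diametral set $\{u,u_1,u_2,v\}$ alone already shows the loss set is nonempty, so a strict inequality cannot come for free. The fallback $H':=G+uv-u_1u_2$ is likewise unanalysed: deleting $u_1u_2$ creates its own losses (any $S$ for which $u_1u_2$ was the only $G$-connection between two parts of $S$), which you do not count.

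There is also no proof in the paper to compare yours against, so the only honest assessment is that you have a plausible plan of attack on an open problem, not a proof. The ingredients you invoke from the paper (Theorem~\ref{GeneGraphs} parts 1)--3), Lemma~\ref{Nopend2}, the identity $\eta(G)+\eta(\overline{G})=2^n-1+n+|\mathcal{N}(G,\overline{G})|$) are all correctly quoted and would indeed be the natural starting point, and your observation that a diametral quadruple induces $P_4$ in both $G$ and $\overline{G}$ is right. But the decisive counting step --- a uniform lower bound on gains minus losses that is strictly positive for every diameter-$3$ graph surviving Theorem~\ref{GeneGraphs} --- is exactly what is missing, and it is the entire content of the conjecture.
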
 

One reason why proving this conjecture appears difficult is that adding an edge between two vertices at distance $3$ does not always increase the sum of the numbers of connected induced subgraphs of a graph and its complement. For example, by adding an edge that joins the ends of $P_4$ we obtain $C_4$. Since $C_4$ has no induced $P_4$, by Theorem \ref{Th:GandGbar} we have 
$$
\eta(P_4)+\eta(\overline{P_4})>\eta(C_4)+\eta(\overline{C_4})\,.
$$


\end{document}